\pgfplotsset{compat=1.18}
\newcommand{\N}{\mathbb{N}}
\newcommand{\Z}{\mathbb{Z}}
\newcommand{\R}{\mathbb{R}}
\newcommand{\E}{\mathbb{E}}
\newcommand{\Prob}{\mathbb{P}}
\newtheorem{theorem}{Theorem}[section] 
\newtheorem{corollary}[theorem]{Corollary}
\newtheorem{proposition}[theorem]{Proposition}
\newtheorem{lemma}[theorem]{Lemma}
\theoremstyle{definition}
\newtheorem{definition}[theorem]{Definition}
\theoremstyle{remark}
\newtheorem{remark}[theorem]{Remark}
\title{Percolation on the stationary distributions of the voter model with stirring}
\author{Jhon Astoquillca \thanks{Bernoulli Institute, University of Groningen} \thanks{Groningen Cognitive System and Materials Center, University of Groningen} \thanks{Institute of Mathematics, Statistics and Computer Science, University of S\~ao Paulo, Brazil} \and Franco Severo \thanks{CNRS and Sorbonne Université, Paris, France} \and Réka Szabó${}^\ast$ \and Daniel Valesin \thanks{Department of Statistics, University of Warwick}} 
\date{\today}
\begin{document}

\maketitle

\begin{abstract}
The voter model with stirring is a variant of the classical voter model on $\Z^d$ with two possible opinions (0 and 1) that, in addition to copying neighbouring opinions at rate 1, allows voters to interchange their opinions at rate~$\mathsf{v}$ where~$\mathsf v \ge 0$ is the stirring parameter. This model was considered in \cite{Astoquillca24}, where it was proved that for~$d \ge 3$ and for any~$\mathsf{v}$ the set of extremal stationary measures is given by a family~$\{ \mu_{\alpha,\mathsf{v}}: \alpha \in [0,1] \}$, where~$\alpha$ is the density of voters with opinion~1. Sampling a configuration~$\xi$ from~$\mu_{\alpha, \mathsf v}$, we study~$\xi$ as a site percolation model on~$\Z^d$, where the set of occupied sites is the set of voters with opinion 1 in~$\xi$. Letting~$\alpha_c(\mathsf v)$ be the supremum of all the values of~$\alpha$ for which percolation does not occur~$\mu_{\alpha, \mathsf v}$-a.s., we prove that $\alpha_c(\mathsf{v})$ converges to~$p_c$, the critical density for classical Bernoulli site percolation, as~$\mathsf{v}$ tends to infinity. As a consequence, for $\mathsf v$ large enough, the model exhibits a non-trivial phase transition in~$\alpha$.
\end{abstract}
{\small

\textbf{\textit{Keywords:}} Voter model $\cdot$ Stirring process $\cdot$ Stationary distributions $\cdot$ Bernoulli site percolation $\cdot$ Duality $\cdot$ Annihilating random walks \\
  
  \textbf{\textit{Mathematics Subject Classification (2010)}} 60K35 $\cdot$ 82C22 $\cdot$ 82B43 
}


\section{Introduction}
The \emph{voter model} is a class of interacting particle systems that describes opinion dynamics. It was introduced by Clifford and Sudbury in~\cite{CliffordSudbury73} and Holley and Liggett in~\cite{HolleyLiggett75} and is treated in the expository texts~\cite{Liggett2005,Swart2017}.
In the standard model, each vertex (or site) represents an individual (or voter) which can have one of two possible opinions, corresponding to the states~0 and~1. At rate~1, each vertex updates its opinion by copying the opinion from one of its neighbours chosen uniformly at random. 

Since the introduction of the voter model, several variants have been proposed to incorporate more realistic or complex behaviours. One such variant is the \emph{voter model with stirring}, which augments the classical voter model by allowing neighboring voters to exchange opinions at a rate~$\mathsf v \ge 0$, the \textit{stirring parameter}. This stirring dynamics introduces additional randomness to the model and facilitates interactions between voters.

We consider the model on the $d$-dimensional Euclidean lattice and denote by 0 the lattice origin. For any~$d \in \N$ and real number~$\mathsf v \ge 0$, the voter model with stirring on~$\Z^d$ is a Markov process, denoted by~$(\xi_t)_{t \ge 0}$, with configuration space~$\{0,1\}^{\Z^d}$ whose stochastic dynamics can be described informally as follows. Each site~$x$ updates its opinion at rate~1 by choosing a site~$y$ uniformly at random from its~$2d$ nearest neighbours and adopting the opinion of~$y$. In addition, each pair of neighbouring sites~$\{x,y\}$ updates their opinions simultaneously at rate~$\mathsf v/(2d)$ by interchanging them. In Section~\ref{section_vm_with_stirring} we give a formal definition of the model.

The set of stationary measures of the model is described in~\cite{Astoquillca24}. For any fixed~$\mathsf v$ and~$d = 1$ or~$2$, the only extremal stationary measures are the consensus measures, which assign full mass to configurations that are identically equal to 0 or 1. For~$d \geq 3$ the extremal stationary measures form a family~$\{ \mu_{\alpha, \mathsf{v}}: \alpha \in [0,1] \}$. Each of these measures is translation-invariant, ergodic with respect to translations in~$\Z^d$ and has polynomial decay of correlations. Moreover, the density of~1's in $\mu_{\alpha, \mathsf{v}}$ is~$\alpha$, that is
$$ \mu_{\alpha, \mathsf v}( \xi \in \{0,1\}^{\Z^d} : \xi(0) = 1 ) = \alpha. $$

The objective of this paper is to study site percolation on configurations sampled from~$\mu_{\alpha, \mathsf v}$.  This problem was previously addressed for the classical voter model in \cite{RathValesin2017}, where the authors proved non-trivial percolation phase transition on the family of extremal stationary distributions~$\{\mu_\alpha:\alpha \in [0,1]\}$ of the model on~$\Z^d$ for~$d \geq 5$. That is, denoting by~$\mathrm{Perc} \subset \{0,1\}^{\Z^d}$ the set of configurations~$\xi \in \{0,1\}^{\Z^d}$ such that the subgraph of~$\Z^d$ induced by the set of occupied sites~$\{x \in \Z^d: \xi(x)=1\}$ has an infinite connected component, there exists a \textit{critical density}~$ \alpha_c \in (0,1)$ such that
\begin{equation}\label{eq_phase_transition_classic}
    \mu_\alpha(\mathrm{Perc}) = 0 \quad \text{for any } \alpha < \alpha_c \quad \text{and} \quad \mu_\alpha(\mathrm{Perc}) = 1 \quad \text{for any } \alpha > \alpha_c.
\end{equation}

Further progress in this direction was made in \cite{RathValesin2017, RathValesin17OnT}, where the authors considered a spread-out version of the voter model on~$\Z^d$, introducing a range parameter~$R$ that extends the interaction radius of the voters. They proved that for any~$d \ge 3$ and~$R$ large enough a nontrivial phase transition exists with a critical density~$\alpha_c(R)$, and this critical parameter approaches~$p_c$, the critical threshold for classical Bernoulli site percolation, as~$R$ tends to infinity.

In this work, we extend these studies to the voter model with stirring on~$\Z^d$. For any fixed~$\mathsf v \ge 0$ we have by ergodicity that~$\mu_{\alpha,\mathsf v}(\mathrm{Perc})$ is either 0 or 1. We define
$$ \alpha_c(\mathsf v) := \sup\{ \alpha \in [0,1]: \mu_{\alpha, \mathsf v}(\mathrm{Perc}) = 0 \}. $$
Since the family~$\mu_{\alpha, \mathsf v}$ is stochastically increasing in~$\alpha$, we have the analogous property as in~\eqref{eq_phase_transition_classic}. Our main results are the following.
\begin{theorem}\label{thm_main_2}
    For any~$d \ge 3$, as~$\mathsf{v} \to \infty$, the critical density value~$\alpha_c(\mathsf{v})$ for percolation phase transition of the stationary measures of the voter model with stirring parameter~$\mathsf v$ converges to the critical density value~$p_c$ for classical Bernoulli site percolation, that is, 
    $$ \lim_{\mathsf v \to \infty} \alpha_c(\mathsf v) = p_c. $$
\end{theorem}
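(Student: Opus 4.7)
The plan is to combine the standard voter-model-with-stirring duality with a block/renormalization argument, in the spirit of the analysis of the spread-out voter model in \cite{RathValesin2017,RathValesin17OnT}. The proof breaks naturally into three steps.

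Step~1: duality and local convergence. The voter model with stirring has the standard graphical construction with rate-$1$ voter arrows at each site and rate-$\mathsf v$ stirring arrows on each edge. Its dual started from a finite $A \subset \Z^d$ is a system $(\eta^A_t)_{t\ge 0}$ of particles that each move by a random walk of total jump rate $1+2d\mathsf v$ and that coalesce whenever one lands on another via a voter arrow (stirring swaps between two occupied sites are trivial). Combined with the construction of $\mu_{\alpha,\mathsf v}$ from \cite{Astoquillca24} as the $t\to\infty$ limit, from $\mathrm{Bernoulli}(\alpha)$, of the voter model with stirring, duality yields
$$\mu_{\alpha,\mathsf v}(\xi\equiv 1 \text{ on } A)=\E\bigl[\alpha^{|\eta^A_\infty|}\bigr].$$
The first key estimate is $\Prob(|\eta^A_\infty|<|A|)=O(|A|^2/\mathsf v)$. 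To prove it, rescale time by $\mathsf v^{-1}$: each lineage becomes a walk of order-one speed, while the coalescence rate for any adjacent pair is $O(1/\mathsf v)$; since $d\ge 3$ gives transient walks, the expected total time two lineages spend adjacent is $O(1)$, so the total coalescence probability for any pair is $O(1/\mathsf v)$. This yields $\mu_{\alpha,\mathsf v}(\xi\equiv 1 \text{ on } A)=\alpha^{|A|}+O(|A|^2/\mathsf v)$ and, via the natural dual coupling, the total-variation bound $\|\mu_{\alpha,\mathsf v}|_B-\mathrm{Bernoulli}(\alpha)|_B\|_{\mathrm{TV}}=O(|B|^2/\mathsf v)$ on any finite box $B$.

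Step~2: block/LSS argument. For $\limsup_{\mathsf v\to\infty}\alpha_c(\mathsf v)\le p_c$, fix $\alpha>p_c$ and invoke a finite-size criterion for supercritical Bernoulli site percolation (e.g.\ a Penrose--Pisztora block construction): for every $\epsilon>0$ there exist a scale $L$ and a local ``good-block'' event $G_L$ on $B_L=[-L,L]^d$ with $\mathrm{Bernoulli}(\alpha)(G_L)>1-\epsilon$, such that any translation-invariant field of block indicators $(\mathbbm 1_{G_L(x)})_{x\in\Z^d}$ that is $k$-dependent at block scale and has marginal above some $1-\epsilon'$ stochastically dominates a supercritical Bernoulli product measure on the renormalized lattice, by Liggett--Schonmann--Stacey, forcing percolation in the original lattice. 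The matching bound $\liminf_{\mathsf v\to\infty}\alpha_c(\mathsf v)\ge p_c$ is obtained symmetrically from a subcritical finite-size criterion (exponential decay of finite clusters): for $\alpha<p_c$ a ``bad-block'' event has small Bernoulli probability and a Peierls-type argument rules out percolation once the block-indicator field is $k$-dependent with sufficiently small marginal.

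Step~3: long-range dependence, the main obstacle. The chief difficulty is that $\mu_{\alpha,\mathsf v}$ is not $k$-dependent for any finite $k$. The plan is to use the finite-time approximation $\nu^T_{\alpha,\mathsf v}$, the law at time $T$ of the voter model with stirring started from $\mathrm{Bernoulli}(\alpha)$: under $\nu^T_{\alpha,\mathsf v}$, the coordinate $\xi(x)$ depends only on the dual from $x$ restricted to $[0,T]$, whose support lies with Gaussian tails in a ball of radius $O(\sqrt{\mathsf v T})$, so after truncating an exponentially small tail $\nu^T_{\alpha,\mathsf v}$ is genuinely finite-range dependent of range $O(\sqrt{\mathsf v T})$. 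Choosing $T=cL^2/\mathsf v$ yields $\sqrt{\mathsf v T}\le L$, so that the block indicators under $\nu^T_{\alpha,\mathsf v}$ are $O(1)$-dependent at block scale. The Step~1 coupling also shows that the restrictions to $B_L$ of $\nu^T_{\alpha,\mathsf v}$, $\mu_{\alpha,\mathsf v}$, and $\mathrm{Bernoulli}(\alpha)$ are all within $O(L^{2d}/\mathsf v)$ in total variation, so for $\mathsf v$ large enough (depending only on $L$, which itself depends only on $\alpha-p_c$) the block-marginal thresholds of Step~2 are met by $\nu^T_{\alpha,\mathsf v}$. Applying LSS (respectively Peierls) at block scale then gives block-level percolation (respectively non-percolation) under $\nu^T_{\alpha,\mathsf v}$. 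The hardest step is to transfer this conclusion from $\nu^T_{\alpha,\mathsf v}$ back to $\mu_{\alpha,\mathsf v}$: one needs to show that the relevant block-crossing probabilities under $\mu_{\alpha,\mathsf v}$ themselves sit on the correct side of the LSS/Peierls thresholds, uniformly across scales, for which the quantitative coalescence bound from Step~1 and the careful trade-off between $T$, $L$, and $\mathsf v$ are essential.
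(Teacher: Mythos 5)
Your Step~1 is sound and close in spirit to what the paper does locally: the probability that two dual lineages ever coalesce is indeed $O(1/\mathsf v)$ per pair (the paper splits this into a $\mathsf v$-independent ``ever become adjacent'' kernel $h(x,y)\le c_{\mathrm{nn}}|x-y|^{2-d}$ plus a short-time pure-stirring analysis), and your single-box inputs in Step~2 (sharpness of Bernoulli percolation below $p_c$, a Pisztora/Cerf-type coarse-graining event above $p_c$) are exactly the ones the paper uses. The genuine gap is in Step~3, and you essentially acknowledge it yourself: the whole difficulty of the theorem is that $\mu_{\alpha,\mathsf v}$ is not $k$-dependent at any scale, and your proposal does not supply a mechanism that closes this. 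The LSS/Peierls block route needs either finite-range dependence of the block field or control of conditional block probabilities given the exterior, and neither holds for $\mu_{\alpha,\mathsf v}$: at fixed $\mathsf v$ the pair correlation decays only like $|x-y|^{2-d}/\mathsf v$, so the total influence on one block of size $L$ from the configuration at distance $\ge R$ grows like $L^dR^2/\mathsf v$ and diverges as $R\to\infty$. Passing through the finite-time measure $\nu^T_{\alpha,\mathsf v}$ does not repair this: $\nu^T$ is a different (non-stationary) measure, percolation or non-percolation under $\nu^T$ implies nothing about $\mu_{\alpha,\mathsf v}$, and the ``transfer back'' you flag as the hardest step is precisely the missing argument — matching one-block marginals up to $O(L^{2d}/\mathsf v)$ in total variation does not yield the joint control over arbitrarily many distant blocks that LSS domination or a Peierls sum requires, because those arguments are applied at all scales simultaneously (percolation is a tail event) while your TV estimate degenerates on boxes growing with the scale.

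For contrast, the paper's way around this is not an LSS domination at a single block scale but a genuine multiscale scheme: an annihilation lemma gives, on any finite set, the explicit comparison $\mu_{\alpha,\mathsf v}(E)\le \pi_\alpha(E)\prod_{x<y}(1+h(x,y)(\widehat\alpha^{-2}-1))$; crossings at scale $L_N$ are decomposed via proper embeddings of a binary tree into $2^N$ leaf boxes that are spread out on all scales, which makes the pairwise product over the union of leaf boxes at most $\exp(C2^N)$ rather than divergent; and at the leaf boxes the dual is run only for time $T=\ell/(1+\mathsf v)$, where for $\mathsf v$ large it performs only stirring jumps and can be coupled to independent random walks, converting the leaf marginals into a slightly sprinkled Bernoulli measure. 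The exponentially small (in $L$, resp.\ $M$) single-box probabilities then beat both the embedding entropy $(C_d\ell^{2d-2})^{2^N}$ and the correction $\exp(C2^N)$, giving the $2^{-2^N}$ bounds that imply non-percolation below $p_c$ and percolation above $p_c$. Without an ingredient playing the role of this spread-out multiscale bookkeeping (or some other way to control the non-summable long-range dependence uniformly in the scale), your outline does not yet constitute a proof of either inequality for $\alpha_c(\mathsf v)$.
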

Our main tools for proving Theorem~\ref{thm_main_2} are a multiscale renormalization scheme, introduced in Section~\ref{ss_renormalization}, and the decorrelation inequalities of Lemma~\ref{lemma_decomposition_main} and Lemma~\ref{lemma_no_same_E}. These inequalities play the role of a sprinkling argument and enable a comparison between~$\mu_{\alpha,\mathsf v}$ and~$\pi_{\alpha'}$, the Bernoulli($\alpha'$) product measure, for some~$\alpha'$ close to~$\alpha$.

It is well-known that~$p_c \in (0,1)$ for~$d \ge 2$ (see for instance~\cite{Grimmett99}). Using this fact, the following result is a straightforward consequence of Theorem~\ref{thm_main_2}.
\begin{corollary}\label{thm_main_1}
    For any $d \ge 3$ there exits $\mathsf v_0 = \mathsf{v}_0(d) \in (0,\infty)$ such that if $\mathsf v \ge \mathsf v_0$ then the family of stationary distributions of the voter model with stirring~$\{\mu_{\alpha,\mathsf v}:\alpha \in [0,1]\}$ exhibits a non-trivial percolation phase transition.
\end{corollary}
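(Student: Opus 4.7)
The plan is to derive the corollary directly from Theorem~\ref{thm_main_2} together with the classical fact that~$p_c \in (0,1)$ for~$d \ge 2$ (hence for~$d \ge 3$), as cited from~\cite{Grimmett99}. The excerpt has already established that, by stochastic monotonicity of the family~$\{\mu_{\alpha,\mathsf v}:\alpha\in[0,1]\}$ in~$\alpha$, the threshold~$\alpha_c(\mathsf v)$ automatically satisfies the phase-transition dichotomy analogous to~\eqref{eq_phase_transition_classic}: $\mu_{\alpha,\mathsf v}(\mathrm{Perc})=0$ for~$\alpha<\alpha_c(\mathsf v)$ and $\mu_{\alpha,\mathsf v}(\mathrm{Perc})=1$ for~$\alpha>\alpha_c(\mathsf v)$. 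Consequently, the only thing left to verify in order to obtain a \emph{non-trivial} phase transition is that~$\alpha_c(\mathsf v) \in (0,1)$ for all~$\mathsf v$ sufficiently large.

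To show this, I would set~$\varepsilon := \tfrac{1}{2}\min\{p_c,\,1-p_c\}$, which is strictly positive by the classical bound. Applying Theorem~\ref{thm_main_2}, there exists~$\mathsf v_0 = \mathsf v_0(d) \in (0,\infty)$ such that~$|\alpha_c(\mathsf v) - p_c| < \varepsilon$ for every~$\mathsf v \ge \mathsf v_0$. For such~$\mathsf v$, the triangle inequality gives
$$ 0 < p_c - \varepsilon < \alpha_c(\mathsf v) < p_c + \varepsilon < 1, $$
so~$\alpha_c(\mathsf v)\in (0,1)$, and combining this with the dichotomy recalled above yields a non-trivial percolation phase transition for the family~$\{\mu_{\alpha,\mathsf v}:\alpha\in[0,1]\}$.

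There is no real obstacle here: the corollary is a packaging statement that turns the quantitative convergence of Theorem~\ref{thm_main_2} into a qualitative existence claim, and all the substantive work (existence and monotonicity of the critical value, the limit~$\alpha_c(\mathsf v)\to p_c$, and non-triviality of~$p_c$) has already been carried out or invoked earlier in the paper.
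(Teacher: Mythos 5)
Your proposal is correct and follows exactly the route the paper intends: the corollary is stated as a direct consequence of Theorem~\ref{thm_main_2} together with $p_c\in(0,1)$, and your $\varepsilon$-argument simply makes explicit the obvious deduction that $\alpha_c(\mathsf v)\in(0,1)$ for $\mathsf v$ large, combined with the dichotomy already established via stochastic monotonicity.
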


\subsection{Related work and organization of the paper}
In recent years, percolation models with long-range interactions exhibiting polynomial decay have attracted considerable interest. These models pose significant challenges, as many classical tools developed for Bernoulli percolation cannot be directly applied. For further details, see the Introduction of~\cite{Duminiletal23} and the references therein. The family of stationary measures of the voter model is a notable example within these models. We have previously referenced~\cite{RathValesin2017,RathValesin17OnT}, which study the site percolation phenomena on the classical voter model and the spread-out voter model.

The voter model with stirring has been previously studied in the literature~\cite{DeMasiFerrariLebowitz96,DurrettNeuhauser94,MiltonLandim2023}. Similarly, stirred variants of other interacting particle systems, such as the Ising model~\cite{Neuhauser1990} and the contact process~\cite{Katori94,Konno95,LevitValesin17,MytnikShlomov21}, have also been investigated.

The rest of the paper is organized as follows. In Section~\ref{section_2} we introduce the voter model with stirring on~$\Z^d$ $(d \ge 3)$ and its dual process, a \emph{system of coalescing-stirring particles}. Additionally, we describe the extremal stationary measures of the voter model with stirring and present a renormalization scheme used to analyze the percolation properties of this family of measures.

Section~\ref{section_3} is a lengthy section devoted to the proof of the decorrelation inequality of Lemma~\ref{lemma_decomposition_main}. The proof combines nearest-neighbor annihilating random walks, the renormalization scheme, and a coupling between stirring processes and independent random walks, detailed in the Appendix, Section~\ref{Section_Appendix}. Since several of the ideas developed there will later be extended in a more general setting, we present the argument in full detail. In Section~\ref{section_4}, we apply this lemma to a local percolation event that is increasing to prove that~$\liminf_{\mathsf v \to \infty}\alpha_c(\mathsf v) \ge p_c$. 

For the reverse inequality, we use a coarse-graining argument based on a local percolation event that is neither increasing nor decreasing. However, this event can be ``decomposed'' into an increasing-decreasing event and a decreasing-increasing event; see Section~\ref{section_5} for precise definitions. To handle these events, we require an extension of the decorrelation inequality from Lemma~\ref{lemma_decomposition_main}. This extension is developed in Section~\ref{section_5}, where we prove Lemma~\ref{lemma_no_same_E}. Finally, in Section~\ref{section_6}, we apply this result to prove that~$\limsup_{\mathsf v \to \infty}\alpha_c(\mathsf v) \leq p_c$, thereby completing the proof of Theorem~\ref{thm_main_2}.

\section{Preliminaries}\label{section_2}

\subsection{The model}\label{section_vm_with_stirring}
Given a vertex $x \in \Z^d$, we denote by~$|x|_1$ its~$\ell^1$-norm. Two vertices~$x,y$ are neighbors if~$|x-y|_1 = 1$; we denote this by~$x \sim y$. We denote by~$\{x,y\}$ an undirected edge between $x$ and $y$ and by~$(x,y)$ a directed edge from~$x$ to~$y$. \\

\noindent \textbf{The voter model with stirring.} Given $d \in \N$ and $\mathsf{v} \in [0,\infty)$, the \textit{voter model with stirring} on~$\Z^d$, denoted by~$(\xi_t)_{t \geq 0}$, is a Feller process with state space $\{0,1\}^{\Z^d}$ and pre-generator
$$\mathcal{L} f(\xi) = \sum_{ \substack{\{x,y\}:x,y \in \Z^d,\\ x \sim y}} \frac{\mathsf{v}}{2d} \cdot \big[ f(\xi^{x,y}) - f(\xi)\big] + \sum_{ \substack{(x,y):x,y \in \Z^d,\\ x \sim y}}   \frac{1}{2d}\cdot \big[ f(\xi^{y \to x}) - f(\xi) \big],$$
where $f:\{0,1\}^{\Z^d} \to \R$ is a function that only depends on finitely many coordinates, and for any~$x,y \in \Z^d$,~$\xi \in \{0,1\}^{\Z^d}$ we write
$$\xi^{x,y}(z) = \left\{ \begin{array}{ll}
\xi(z) & \text{if } z \notin \{x,y\},  \\
    \xi(x) & \text{if } z = y, \\
    \xi(y) & \text{if } z = x \end{array} \right. \hspace{2mm} \text{ and }\hspace{2mm} \xi^{y \to x}(z) = \left\{ \begin{array}{ll}
    \xi(z) & \text{if } z \neq y,  \\
    \xi(x) & \text{if } z = y. \end{array} \right.$$

Given~$\xi \in \{0,1\}^{\Z^d}$, we denote by~$P_\xi$ a probability measure under which~$(\xi_t)_{t \ge 0}$ is defined and satisfies~$P_\xi(\xi_0 = \xi) = 1$. \\

\noindent \textbf{System of coalescing-stirring particles.} We now present a particle system that satisfies a duality relation with the voter model with stirring. To do so, we first introduce some notation. 

For~$x,y,z \in \Z^d$, we write
\begin{equation*}
	z^{x\to y}:= \begin{cases}
		z&\text{if } z \neq x,\\ y&\text{if } z = x,
	\end{cases} \qquad \text{and} \qquad 
	z^{x\leftrightarrow y}:= \begin{cases}
		z&\text{if } z \notin \{x,y\},\\ y&\text{if } z = x,\\x&\text{if } z = y.
	\end{cases}
\end{equation*}
Fix~$n \in \N$, for any~$\mathbf{y} = (y_1,\dots, y_n) \in (\Z^d)^n$ we write
\[
	\mathbf{y}^{x\to y}=(y_1^{x \to y},\ldots, y_n^{x\to y}) \qquad \text{and}\qquad\mathbf{y}^{x\leftrightarrow y}=(y_1^{x \leftrightarrow y},\ldots, y_n^{x \leftrightarrow y}).
\]

Let~$\mathsf v \ge 0$ and~$\mathbf x=(x_1,\ldots,x_n) \in (\Z^d)^n$. We define a \emph{system of coalescing-stirring particles}~$(\mathbf{Y}_t)_{t \ge 0}=(Y^{x_1}_t,\ldots,Y^{x_n}_t)_{t \ge 0}$ as the continuous-time Markov chain on~$(\Z^d)^n$ started at~$\mathbf x$ with generator
\begin{align}\label{eq_lvcs}
	\mathcal L^{(\mathsf v)}_{\mathrm{cs}}f(\mathbf y) &= \mathcal L_{\mathrm{coal}}f(\mathbf{y}) + \mathcal L_{\mathrm{stir}}^{(\mathsf{v})} f(\mathbf{y}),
\end{align}
where
\begin{equation}\label{generator_L_v_stirring}
    \mathcal L_{\mathrm{coal}} f(\mathbf{y}) = \hspace{-2mm} \sum_{\substack{(x,y): x,y \in \Z^d:\\ x\sim y}} \hspace{-3mm} \frac{f(\mathbf{y}^{x \to y}) - f(\mathbf{y})}{2d},\quad \mathcal L_{\mathrm{stir}}^{(\mathsf v)}f(\mathbf{y}) = \hspace{-2mm} \sum_{\substack{\{x,y\}: x,y \in \Z^d:\\ x\sim y}} \hspace{-3mm} \frac{\mathsf{v} \cdot (f(\mathbf{y}^{x \leftrightarrow y}) - f(\mathbf{y}))}{2d}.    
\end{equation}

We see from the definition above that each $(Y^{x_i}_t)_{t \geq 0}$ is a continuous-time simple random walk on~$\Z^d$ with rate $1+\mathsf{v}$, i.e., the holding times between jumps are i.i.d. Exp($1+\mathsf v$) random variables and when a jump occurs at time $t$ and site $Y^{x_i}_{t^-} = x \in \Z^d$, then~$Y^{x_i}_t$ is uniformly distributed on the nearest neighbours of~$x$. Moreover, for~$i \neq j$, when~$Y^{x_i}_t$ and~$Y^{x_j}_t$ are in neighboring positions, say~$Y^{x_i}_t=w\sim z=Y^{x_j}_t$, they can interact in two possible ways. First, with rate~$\mathsf v/(2d)$ they exchange their positions. Second, with rate~$1/(2d)$ the particle at~$w$ jumps on top of the particle at~$z$, and with rate~$1/(2d)$ the particle at~$z$ jumps on top of the particle at~$w$; in either case, they coalesce, and move together from then onward.

Given~$n \in \N$ and~$\mathbf{x} \in (\Z^d)^n$, we denote by~$P_{\mathbf x}$ a probability measure under which~$(\mathbf{Y}_t)_{t \ge 0}$ is defined and satisfies~$P_\mathbf{x}(\mathbf{Y}_0 = \mathbf{x}) = 1$.\\

\noindent \textbf{Duality.} The voter model with stirring and the system of coalescing-stirring particles satisfy the following duality relation, stated in Section~3.1 of~\cite{Astoquillca24}. For all $n \in \N$, $\xi \in \{0,1\}^{\Z^d}$ and $\mathbf{x}=(x_1,\dots,x_n) \in (\Z^d)^n$,
\begin{equation}\label{eq_duality}
	P_\xi( \xi_t \equiv 1 \text{ on } \{x_1,\ldots, x_n\}) = P_{\mathbf x}( \xi \equiv 1 \text{ on } \{Y^{x_1}_t,\ldots, Y^{x_n}_t\}). 
\end{equation}
Let~$|\mathbf Y_t|$ denote the cardinality of the set~$\{Y^{x_1}_t,\ldots, Y^{x_n}_t\}$ and let~$\pi_\alpha$ denote the product Bernoulli($\alpha$) measure on~$\{0,1\}^{\Z^d}$. Integrating both sides of~\eqref{eq_duality}, we have
\begin{equation}
	\label{eq_duality_bernoulli}
	\int \pi_\alpha(\mathrm{d}\xi)\; P_\xi( \xi_t \equiv 1 \text{ on } \{x_1,\ldots, x_n\}) = E_{\mathbf x}[\alpha^{|\mathbf Y_t|}],
\end{equation}
where~$E_{\mathbf x}$ is the expectation operator associated with~$P_{\mathbf x}$. Observe that~$t \mapsto |\mathbf Y_t|$ is non-increasing, so we can define~$|\mathbf Y_\infty|:=\lim_{t \to \infty} |\mathbf Y_t|$. This then shows that, when the voter model with stirring is started from~$\pi_\alpha$ and time is taken to infinity, its law converges in distribution to a measure~$\mu_{\alpha,\mathsf v}$ characterized by the equality
\[\mu_{\alpha,\mathsf v}(\xi:\xi \equiv 1 \text{ on } \{x_1,\ldots,x_n\}) = E_{\mathbf{x}}[\alpha^{|\mathbf Y_\infty|}].\]
Since this measure is obtained as a distributional limit of the voter model with stirring as~$t \to \infty$, it is stationary for the dynamics.

The following procedure gives a way to sample from the measure~$\mu_{\alpha,\mathsf v}$ inside a finite set~$\Lambda = \{x_1,\ldots,x_n\} \subset \Z^d$. Letting~$\mathbf x=(x_1,\ldots,x_n)$ we use the process~$(\mathbf Y_t)_{t \ge 0}$ started from~$\mathbf x$ to obtain a random partition~$\mathscr C$ of~$\Lambda$ by declaring that~$x_i$ and~$x_j$ are in the same partition block if there exists~$t\geq 0$ such that~$Y^{x_i}_t=Y^{x_j}_t$ (that is, the two particles eventually coalesce). Conditionally on a realization of this partition, say~$\mathscr C=\{\mathcal C_1,\ldots, \mathcal C_m\}$, we sample independent Bernoulli($\alpha$) random variables,~$\mathcal B_1,\ldots,\mathcal B_m$, and then construct~$\{\xi(x):x \in \Lambda\}$ as follows. For each~$i=1,\dots,m$ we set~$\xi|_{\mathcal C_i} \equiv \mathcal B_i$, where~$\xi|_{\mathcal C_i}$ denotes the restriction of~$\xi$ to~$\mathcal C_i$.

It is easy to check that this procedure gives a measure which can be extended by a consistency property to a measure on~$\{0,1\}^{\Z^d}$. This measure in turn coincides with~$\mu_{\alpha,\mathsf v}$, because they coincide on sets of the form~$\{\xi: \xi \equiv 1 \text{ on } \Lambda\}$ for any finite~$\Lambda$.

We now present some properties of the family~$\mu_{\alpha,\mathsf v}$, $\alpha \in [0,1]$. Recall that a stationary measure of an interacting particle system is said to be \emph{extremal} if it cannot be expressed as a non-trivial convex combination of other stationary measures. 

\begin{lemma}\label{familia_mu_alpha_properties}
For any~$\mathsf{v} \ge 0$ the family $\{ \mu_{\alpha,\mathsf v}: \alpha \in [0,1] \}$ has the following properties:
\begin{itemize}
    \item[$\mathbf{(A)}$] for $d=1,2$, $\mu_{0,\mathsf v}$ and $\mu_{1,\mathsf v}$ are the only extremal stationary measures of the voter model with stirring, and for~$d \ge 3$ the family~$\{\mu_{\alpha,\mathsf v}: \alpha \in [0,1]\}$ coincides with the set of extremal stationary measures;
    \item[$\mathbf{(B)}$] for each $\alpha \in [0,1]$ the measure $\mu_{\alpha,\mathsf v}$ is invariant and ergodic under translations of~$\Z^d$;
    \item[$\mathbf{(C)}$] if $\alpha \leq \alpha'$, then $\mu_{\alpha,\mathsf v}$ is stochastically dominated by~$\mu_{\alpha', \mathsf v}$;
    \item[$\mathbf{(D)}$] the law of $1-\xi$ under $\mu_{\alpha,\mathsf v}$ is the same as the law of $\xi$ under $\mu_{1-\alpha,\mathsf v}$. 
\end{itemize}
\end{lemma}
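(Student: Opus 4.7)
The plan is to address each of the four properties in turn, leaning heavily on the construction of $\mu_{\alpha,\mathsf v}$ as the distributional limit of the voter model with stirring started from $\pi_\alpha$ and on the partition sampling procedure described just above the lemma. Property (A), which identifies the extremal stationary measures, is essentially the main result of \cite{Astoquillca24} and I would only sketch it. The strategy parallels Holley--Liggett: first note that each $\mu_{\alpha,\mathsf v}$ is stationary by construction, and then show that every stationary measure is a mixture of these via the duality identity applied to cylinder events $\{\xi\equiv 1 \text{ on }\Lambda\}$ combined with a convergence statement for the dual as $t\to\infty$. The key dichotomy is driven by the behaviour of the coalescing-stirring particles: for $d\le 2$, two such walks with jump rate $1+\mathsf v$ meet almost surely by recurrence, so $|\mathbf Y_\infty|\in\{0,1\}$ and only the consensus measures appear; for $d\ge 3$, transience implies $P_{\mathbf x}(|\mathbf Y_\infty|=n)>0$ whenever the entries of $\mathbf x$ are distinct, which produces the nontrivial one-parameter family.

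For (B), translation invariance is immediate from the sampling procedure: the law of $(\mathbf Y_t)_{t\ge 0}$ is translation-equivariant in its starting point, and Bernoulli labels are assigned in the same way. For ergodicity (in the relevant case $d\ge 3$), I would prove mixing by showing $\mu_{\alpha,\mathsf v}(A\cap\tau_x B)\to\mu_{\alpha,\mathsf v}(A)\mu_{\alpha,\mathsf v}(B)$ as $|x|\to\infty$ for cylinder events $A,B$ depending on finite sets $\Lambda,\Lambda'$. The key estimate is that the probability that any walk starting from $\Lambda$ and any walk starting from $\Lambda'+x$ ever coalesce tends to $0$, which follows from transience of simple random walk on $\Z^d$ for $d\ge 3$; on the complementary event the two partition sub-structures (and hence the Bernoulli samples) are independent, so the restrictions to $\Lambda$ and to $\Lambda'+x$ decouple in the limit.

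Properties (C) and (D) are direct consequences of the sampling recipe. For (C), realize both $\mu_{\alpha,\mathsf v}$ and $\mu_{\alpha',\mathsf v}$ on a common finite $\Lambda$ using the same random partition $\mathscr C=\{\mathcal C_1,\ldots,\mathcal C_m\}$, and assign labels via i.i.d.\ uniforms $U_i\in[0,1]$ by putting $\mathbf 1_{\{U_i\le \alpha\}}$ on $\mathcal C_i$ for the first measure and $\mathbf 1_{\{U_i\le \alpha'\}}$ on $\mathcal C_i$ for the second; the pointwise inequality $\xi\le\xi'$ persists after passing from $\Lambda$ to $\Z^d$ by consistency. For (D), the configuration $1-\xi$ corresponds to the same partition $\mathscr C$ with the flipped labels $1-\mathcal B_i$, which are i.i.d.\ $\mathrm{Bernoulli}(1-\alpha)$, and this is exactly the sampling recipe for $\mu_{1-\alpha,\mathsf v}$. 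The main obstacle is really (A), which needs the full dual-process analysis carried out in \cite{Astoquillca24}; once the partition sampling procedure is accepted, (B)--(D) reduce to routine random-walk and coupling arguments.
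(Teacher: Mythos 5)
Your treatment of (A), (C) and (D) is essentially the paper's: (A) is delegated to \cite{Astoquillca24}, and (C), (D) are read off the random-partition sampling of $\mu_{\alpha,\mathsf v}$ on a finite set (same partition $\mathscr C$, coupled uniform labels for monotonicity, flipped Bernoulli labels for the symmetry), which is exactly how the paper argues. The one genuine divergence is (B): the paper simply cites Proposition~1.2 of \cite{Astoquillca24} for translation invariance and ergodicity, whereas you sketch a direct mixing argument based on transience of the dual walks; this buys a self-contained argument at the cost of some care you currently skip. In particular, the decoupling step is stated too loosely: on the event that no walk started in $\Lambda$ \emph{coalesces} with a walk started in $\Lambda'+x$, the two partition substructures are not literally independent, since the two groups of particles can still interact through shared stirring clocks. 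You should instead work on the event that no particle from $\Lambda$ ever becomes a nearest neighbour of a particle from $\Lambda'+x$ (so no shared Poisson process is ever consulted), or couple with two independent systems off that event; the probability of such a cross-group near-encounter is controlled by the same hitting estimate $h(x,y)\le c_\mathrm{nn}\,|x-y|_1^{2-d}$ that the paper uses later, and it vanishes as $|x|\to\infty$, so with this adjustment your mixing argument goes through and the proposal is correct (noting, as you do, that ergodicity is only relevant for $d\ge 3$, since for $d\le 2$ and $\alpha\in(0,1)$ the measure is a mixture of the consensus measures).
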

\begin{proof}
\textbf{(A)} and \textbf{(B)} are proved in Theorem~1.1 and Proposition~1.2 of~\cite{Astoquillca24}, respectively. Properties \textbf{(C)} and \textbf{(D)} are readily obtained from the above construction involving the random partition~$\mathscr C$ of any finite set~$\Lambda \subset \Z^d$.

\end{proof}

\subsection{Renormalization scheme}\label{ss_renormalization}
Given a vertex~$x \in \Z^d$, we denote by~$|x|$ its~$\ell^\infty$-norm. We say that two vertices~$x,y$ are~$\ast$\emph{-neighbours} if~$|x-y|=1$. A $\ast$-\textit{connected path} is a sequence~$\gamma = (\gamma(0), \gamma(1),\dots)$ such that $\gamma(i)$ and $\gamma(i+1)$ are $\ast$-neigbours for each~$i \ge 0$. Likewise, a \emph{connected path} is a sequence~$\gamma = (\gamma(0),\gamma(1),\dots)$ such that~$\gamma(i) \sim \gamma(i+1)$ for each~$i \ge 0$.

Given disjoint sets $A, B \subseteq \Z^d$ and a configuration $\xi \in \{0,1\}^{\Z^d}$, we write $A
\xleftrightarrow[]{\ast \xi} B$, if there exists a~$\ast$-connected path from a $\ast$-neighbor of a point in $A$ to a $\ast$-neighbor of a point in $B$ and $\xi$ is equal to 1 on all points in this path. Analogously, we write $A
\xleftrightarrow[]{\xi} B$, if there exists a connected path from a neighbor of a point in~$A$ to a neighbor of a point in~$B$ and~$\xi$ is equal to 1 on all points in this path. Note that  $A\xleftrightarrow[]{\xi} B$ implies  $A\xleftrightarrow[]{\ast\xi} B$.

The ball of radius $R$ corresponding to the~$\ell^\infty$-norm is given by
\begin{align*}
    & B(R) := \{x \in \Z^d: |x| \leq R\}, \quad B(x,R) := \{y \in \Z^d: |x-y| \leq R \}, \qquad R\in\R^+, x \in \Z^d. 
\end{align*}


Given~$A,B \subseteq \Z^d$, we define the distance between the sets~$A$ and~$B$ as
$$ \mathrm{dist}(A,B): = \min\{ |x-y|: x \in A, y \in B \}. $$
We will consider spatial boxes of scales
\begin{equation}\label{scales_mathcal_L0}
    L_N := L \cdot \ell^N, \quad L,\ell \in \N,\; N\in\N\cup \{0\},
\end{equation}
(that is, $L_0=L$) and use them to study the percolation event defined as
\begin{equation}\label{Perc_def_set}
    \mathrm{Perc} := \left\{ \begin{array}{c}
    \xi \in \{0,1\}^{\Z^d}: \text{ there exists an infinite self-avoiding connected path} \\
    \gamma = (\gamma(0), \gamma(1),\dots) \text{ such that } \xi(\gamma(i)) = 1 \text{ for each } i\geq 0
\end{array}   \right\}.
\end{equation}

The following statement is proved in Section~4.1 of~\cite{RathValesin2017}. As an abuse made to simplify notation, for~$L=1$, we interpret the event~$B(x,L-2) \xleftrightarrow[]{\; \ast \xi \;} B(x,2L)^c$ as meaning simply that~$\xi(x) = 1$. \\

Let~$\mu$ be a measure on~$\{0,1\}^{\Z^d}$ that is invariant under translations of~$\Z^d$. Then,
\begin{equation}\label{eq_part_1_imply_noperc}
    \begin{aligned}
    &\text{if there is a choice of } L \text{ and } \ell \text{ in } \eqref{scales_mathcal_L0} \text{ such that} 
    \\[.2cm]
    &\mu\big( B(L_N - 2) \xleftrightarrow{\; \ast \xi \;} B(2L_N)^c \big) \leq 2^{-2^N} \text{ for any } N \in \N, \text{ then } \mu(\mathrm{Perc})=0;
    \end{aligned}
\end{equation}
and
\begin{equation}\label{eq_part_2_imply_noperc}
    \begin{aligned}
    &\text{if there is a choice of } L \text{ and } \ell \text{ in } \eqref{scales_mathcal_L0} \text{ such that} 
    \\[.2cm]
    & \mu\big( B(L_N - 2) \xleftrightarrow{\; \ast (1-\xi) \;} B(2L_N)^c \big) \leq 2^{-2^N} \text{ for any } N \in \N, \text{ then } \mu(\mathrm{Perc})=1.
    \end{aligned}
\end{equation}

The main tool to prove these facts is a multi-scale renormalization scheme introduced in~\cite{Rath15} and also used in~\cite{RathValesin2017}. We now explain this scheme and recall some of its properties. Fix $L\in \N$ and~$\ell \ge 6$, define~$(L_N)_{N \ge 0}$ as in~\eqref{scales_mathcal_L0}, and define the sublattice
\begin{equation}\label{scales_mathcal_L}
    \mathcal L_N := L_N \cdot \Z^d, \quad N \in \N \cup \{0\}.
\end{equation}
For any $k \in \N \cup \{0\}$ let $T_{(k)} := \{1,2\}^k$ (with~$T_{(0)} := \{\varnothing\}$) and let 
$$ T_N := \cup^N_{k = 0} T_{(k)} $$
be the \textit{binary tree of height} $N$. If $0 \leq k \leq N$ and $m = (\eta_1,\dots,\eta_k) \in T_{(k)}$, we let
$$ m_1 := (\eta_1,\dots,\eta_k,1) \quad\text{and}\quad m_2 := (\eta_1,\dots,\eta_k,2) $$
be the two children of $m$ in $T_{(k+1)}$.

\begin{definition}
    $\mathcal T: T_N \to \Z^d$ is a \textit{proper embedding} of $T_N$ if 
    \begin{enumerate}
        \item $\mathcal T(\{ \varnothing \}) = 0$;
        \item for all $0 \leq k \leq N$ and $m \in T_{(k)}$ we have $\mathcal T(m) \in \mathcal L_{N-k}$;
        \item for all $0 \leq k \leq N$ and $m \in T_{(k)}$ we have 
        $$|\mathcal T(m_1)- \mathcal T(m)| =  L_{N-k}, \qquad |\mathcal T(m_2)- \mathcal T(m)| = 2L_{N-k}.$$
    \end{enumerate}
    We denote by $\mathcal P_N$ the set of proper embeddings of $T_N$ into $\Z^d$.
\end{definition}
We now collect three facts concerning proper embeddings from~\cite{Rath15}. While these are originally proved for~$\ell = 6$, the arguments hold for any~$\ell \ge 6$.\\

\noindent \textbf{Bound on the number of proper embeddings.} There exists a dimension-dependent constant~$C_d>0$ such that
    \begin{equation}\label{bound_for_Lambda_N}
        |\mathcal P_N| \leq (C_d \cdot \ell^{2d-2})^{2^N} \quad \text{ for any } N \in \N.    
    \end{equation}
This fact follows from Lemma~3.2 in~\cite{Rath15}. \\
    
\noindent \textbf{Proper embeddings and crossing events.} For any~$N \in \N$, 
\begin{equation}\label{path_anulus}
    \{ B(L_N-2) \xleftrightarrow[]{\; \ast \xi \;} B(2L_N)^c \} \subset \bigcup_{\mathcal T \in \mathcal P_N} \bigcap_{ m \in T_{(N)}} \{ B(\mathcal T (m), L_0-2) \xleftrightarrow[]{\; \ast \xi \;} B(\mathcal T(m), 2L_0)^c \}.
\end{equation}
The inclusion also holds if the~$\ast$-connected path is replaced with a connected path on both sides of the inclusion, as shown in the proof of~\eqref{path_anulus} in Lemma~3.3 of~\cite{Rath15}. The inclusion asserts that if we have a crossing of the annulus~$B(2L_N) \setminus B(L_N)$ of scale~$L_N$, then there is a proper embedding~$\mathcal T \in \mathcal P_N$ such that all the annuli~$\{B(\mathcal T(m),2L_0) \setminus B(\mathcal T(m),L_0), m \in T_{(N)}\}$ of scale~$L_0$ are crossed. For an illustration of this statement see Figure~\ref{fig_leaves} below or Figure~2 in~\cite{RathValesin2017}. 


\begin{figure}[h]
    \centering
    \includegraphics[width=0.75\linewidth]{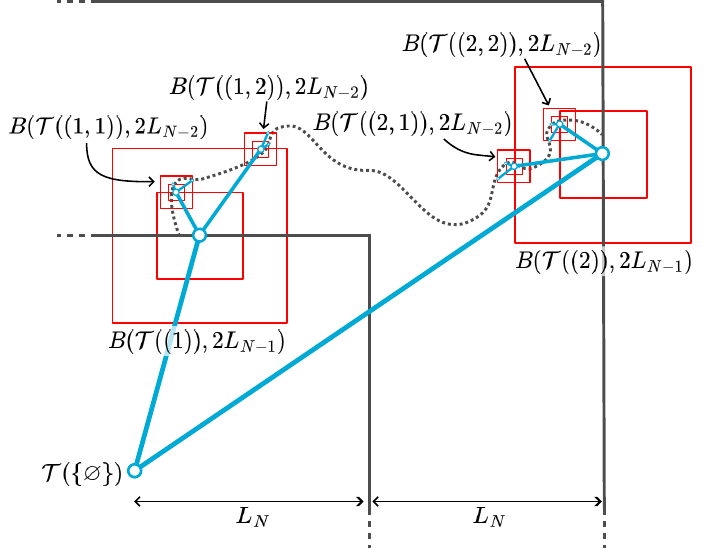}
    \caption{Illustration of a crossing path of scale~$L_N$ and the associated binary tree embedded in $\Z^d$ by~$\mathcal T$.}
    \label{fig_leaves}
\end{figure}

\noindent \textbf{Leaves are spread-out on all scales}. For any $\mathcal T \in \mathcal P_N$ and any $m_0 \in T_{(N)}$, we have
\begin{equation}\label{spread_out_all_scales}
    \big| \big\{ m \in T_{(N)}: \mathrm{dist}\big(B(\mathcal T(m_0)),2L_0),B(\mathcal T(m),2L_0)\big) \leq \ell^k L/2 \big\} \big| \leq 2^{k-1}, \quad k \geq 1.
\end{equation}
This follows from Lemma~3.4 in~\cite{Rath15}. The statement guarantees that in a proper embedding the set of the images of the leaves~$\{ \mathcal T(m): m \in T_{(N)} \}$ is spread-out on all scales.

\section{Decorrelation inequality}\label{section_3}
In this section we study percolation properties of the measure~$\mu_{\alpha,\mathsf v}$ by comparing it with the product Bernoulli($\alpha$) measure~$\pi_\alpha$. We state our main result in Lemma~\ref{lemma_decomposition_main} below. We first need to introduce some notation.

Given~$\Lambda \subset \Z^d$ finite, we let~$\mathcal G_\Lambda$ be the set of subsets of~$\{0,1\}^{\Z^d}$ given by
$$E = \{ \xi \in \{0,1\}^{\Z^d} : \xi |_\Lambda \in E_\Lambda \}, $$
where~$E_\Lambda \subseteq \{0,1\}^\Lambda$. The elements of
$\mathcal G_\Lambda$
are called \emph{cylinder sets}. Given~$E \in \mathcal G_\Lambda$, we say that~$E$ is an~\emph{increasing} event if~$[ \xi \in E,\; \xi(x) \leq \xi'(x) \; \forall x \in \Lambda ]$ implies~$\xi' \in E$, and a~\emph{decreasing} event if~$[ \xi \in E,\; \xi(x) \geq \xi'(x) \; \forall x \in \Lambda ]$ implies~$\xi' \in E$.

Given a vector~$x \in \Z^d$, a set $\Lambda \subseteq \Z^d$ and a configuration~$\xi \in \{0,1\}^{\Lambda}$, we write~$\theta_x \xi$ for the configuration in~$\{0,1\}^{x + \Lambda}$ defined by
$$ (\theta_x \xi)(y) := \xi(y-x), \quad y \in x+ \Lambda.$$
Given a cylinder set~$E$, we write~$\theta_x E := \{ \theta_x \xi: \xi \in E \}$. 

Finally, we define the following constants. Given~$N, \ell, L \in \N$ and~$\alpha \in (0,1)$ we set
\begin{equation}\label{c_ell_L_0_alpha_def}
        c(\ell,L,\alpha,N) := \exp \big\{5c_\mathrm{nn} \cdot  \alpha^{-2} \cdot |B(2L)|^2 \cdot 2^N \cdot \ell^{(2-d)/4} \big\} 
    \end{equation}
and
\begin{equation}\label{q_def}
q(\ell,L) := (3\sqrt{2})^d \cdot c_d \cdot |B(2L)| \cdot \ell^{-d/4},    
\end{equation}
where the constants~$c_\mathrm{nn}$ and~$c_d$ will be given later, in Remark~\ref{remark_value_h_c_1} and Lemma~\ref{lemma_stochastic_domination}, respectively; they only depend on the dimension, and have to do with standard random walk estimates.

\begin{lemma}\label{lemma_decomposition_main}
There exist dimension-dependent constants~$a,b>0$ such that the following holds. Fix~$N, \ell,L \in \N$ with~$\ell \ge 6$ and~$q(\ell,L) \in (0,1)$ and a proper embedding~$\mathcal T \in \mathcal P_N$. Then, there exists~$\mathsf v_0 = \mathsf v_0(\ell)$ such that, for all~$\mathsf{v} \ge \mathsf{v}_0$, ~$\alpha \in (0,1)$, and all increasing events~$E$ and decreasing events~$F$ in~$\mathcal{G}_{B(2L)}$, we have
    \begin{equation}\label{mu_int_theta_incr_decr}
    \begin{aligned}
    & \mu_{\alpha,\mathsf v} \Big( \bigcap_{m \in T_{(N)}} \mathcal \theta_{\mathcal T(m)}E \Big) \leq c(\ell, L,\widehat{\alpha},N) \cdot \sum_{A \subseteq T_{(N)} } \big( \Psi_\mathrm{incr}(E) \big)^{|T_{(N)} \setminus A|} \cdot ( \Psi_1 )^{|A|}, \\
    & \mu_{\alpha,\mathsf v} \Big( \bigcap_{m \in T_{(N)}} \mathcal \theta_{\mathcal T(m)} F \Big) \leq c(\ell, L,\widehat{\alpha},N) \cdot \sum_{A \subseteq T_{(N)} } \big(\Psi_\mathrm{decr}(F) \big)^{|T_{(N)} \setminus A|} \cdot ( \Psi_1 )^{|A|},
    \end{aligned}
\end{equation}
    where~$\widehat{\alpha}:= \min\{\alpha,1-\alpha\}$ and
    \begin{align}
        \label{Psi_1_def} & \Psi_1 = |B(2L)| \cdot \Big( 2d \Big( 1 + \frac{d}{4} \Big)^{-\ell/8} + \exp\{-\ell\} \Big), \\[0.2cm]
        \label{Psi_incr_def} & \Psi_\mathrm{incr}(E) = \pi_{\alpha + q-\alpha q}(E) + a \cdot |B(2L)|^2\exp \Big\{ -\frac{b \cdot \ell^{1/4}}{|B(2L)|} \Big\}, \\[0.2cm]
        \label{Psi_decr_def} & \Psi_\mathrm{decr}(F) =  \pi_{\alpha (1-q)}(F) + a \cdot |B(2L)|^2\exp \Big\{ -\frac{b \cdot \ell^{1/4}}{|B(2L)|} \Big\}.
    \end{align} 
\end{lemma}

In Section~\ref{section_4}, we use this lemma to prove the asymptotic lower bound in Theorem~\ref{thm_main_2}, with a suitable choice of the renormalization scales~$\ell$ and~$L$. The upper bound requires a generalization of this lemma, which is developed in Section~\ref{section_5}.

We prove Lemma~\ref{lemma_decomposition_main} over the next three subsections. First, in Subsection~\ref{ss_proof_proposition}, we provide an upper bound for increasing and decreasing events under~$\mu_{\alpha, \mathsf v}$ (Lemma~\ref{lemma_mu_stir_pi_alpha}) using an annihilation approach similar to that in Lemma~2.5 of~\cite{RathValesin17OnT}. Next, in Subsection~\ref{ss_translation_inc_dec}, we utilize the renormalization scheme and Lemma~\ref{lemma_mu_stir_pi_alpha} to derive~\eqref{ineq_mu_stir_int_C_m}, an upper bound under~$\mu_{\alpha,\mathsf v}$ on the joint occurrence of increasing and decreasing events in the boxes centered at the leaves~$\{\mathcal T(m), m \in T_{(N)}\}$. Finally, in Subsection~\ref{ss_decomposition}, we complete the proof of Lemma~\ref{lemma_decomposition_main} using a coupling between stirring random walks and independent random walks.

\subsection{Graphical construction and consequences}\label{ss_proof_proposition}
We first introduce a useful Poissonian \emph{graphical construction} to sample a system of coalescing-stirring particles, as well as a random configuration~$\xi^{(\alpha)}$ with distribution~$\mu_{\alpha,\mathsf v}$. For every pair of neighboring vertices~$x,y \in \Z^d$, let~$\mathcal{J}^{(x,y)}$, $\mathcal{J}^{(y,x)}$ and~$\mathcal{H}^{\{x,y\}}$ be Poisson point processes on~$[0,\infty)$ with rates~$\tfrac{1}{2d}$, $\tfrac{1}{2d}$ and~$\tfrac{\mathsf{v}}{2d}$, respectively. We denote by $\Prob$ a probability measure under which all these processes are defined and are independent. 

In this probability space, we define a \emph{system of coalescing-stirring particles}~$\{ Y^x_t: x \in \Z^d,\; t \ge 0\}$ as follows. For each~$x \in \Z^d$,~$(Y^x_t)_{t \ge 0}$ is the process on~$\Z^d$ that satisfies~$Y^x_0 = x$ and
\begin{itemize}
    \item if~$Y^x_{t-}=y$ and~$t \in \mathcal J^{(y,z)}$, then we set~$Y^x_t = z$ (this is called an \emph{autonomous jump});
    \item if~$Y^x_{t-}=y$ and~$t \in \mathcal H^{\{y,z\}}$, then we set~$Y^x_t = z$ (this is called a \emph{stirring jump}).
\end{itemize}
Note that for any~$x_1,\dots,x_n \in \Z^d$, the Markov chain $(Y^{x_1}_t,\dots, Y^{x_n}_t)$ is a system of coalescing-stirring particles having~$\mathcal L^{(\mathsf v)}_\mathrm{cs}$ from~\eqref{eq_lvcs} as a generator. 

Analogously to what was described earlier (in the paragraphs before Lemma~\ref{familia_mu_alpha_properties}), we sample from~$\mu_{\alpha,\mathsf v}$ by using a random partition of~$\Z^d$, as well as some extra randomness. We fix an enumeration of~$\Z^d = \{z_1,z_2,\dots\}$ and write
\begin{equation}\label{eq_ordering}
    z_n < z_m \quad \text{if} \quad n < m.    
\end{equation}
We also define an i.i.d. sequence of Bernoulli random variables~$\left(\mathcal B(z_n)\right)_{n\ge 1}$ with parameter~$\alpha$. For any fixed~$\mathsf v \ge 0$ we define a random partition~$\mathscr C$ of~$\Z^d$ by declaring that~$x$ and $y$ belong to the same block if there exists~$t$ such that~$Y^x_t = Y^y_t$. Finally, for all $x \in \Z^d $ we set
\begin{equation}\label{def_xi_alpha}
    \xi^{(\alpha)}(x) = \mathcal B(z_{n(x)}),
\end{equation}
where~$n(x)$ is the smallest integer~$m$ such that~$z_m$ and~$x$ belong to the same block, that is, 
$$ n(x) := \inf\{ m \in \N : Y^{z_m}_t = Y^x_t \text{ for some } t \ge 0 \}. $$
It is easy to check that~$\xi^{(\alpha)}$ is distributed as $\mu_{\alpha,\mathsf v}$. 

We define the set-valued process of the locations of the coalescing-stirring particles of $A$ by
$$ S_t(A) := \{Y^x_t : x \in A\} \quad t \ge 0, $$
and~$|S_\infty(A)| := \lim_{t \to \infty}|S_t(A)|$, which exists for any finite~$A \subset \Z^d$ since~$|S_t(A)|$ is non-increasing. By the duality relationship~\eqref{eq_duality} we have that
\begin{equation}\label{eq_duality_graph_constr}
    \mu_{\alpha,\mathsf v}( \xi : \xi \equiv 1 \text{ on } A ) = \E[ \alpha^{|S_\infty(A)|} ] \quad \text{ for any } A \subset \Z^d \text{ finite.} 
\end{equation}

The next lemma can be proved the same way as Lemma~5.1 of \cite{RathValesin2017} with the following minor modification. The proof of Lemma~5.1 relies on an annihilation approach in which two particles annihilate as soon as they meet. In our case, two particles annihilate as soon as they are nearest neighbours (that is, when their $\ell^1$-distance is 1). When more than two particles become nearest neighbors, we annihilate the smallest neighbouring pair under the ordering~$<$ of~\eqref{eq_ordering}. The rest of the proof remains unchanged so we omit it.
\begin{lemma}\label{lemma_annihilation}
For any $A \subset \Z^d$ finite,~$\beta \in (0,1)$ and~$\mathsf v \ge 0$ we have
$$\E[ \beta^{|S_\infty(A)|} ] \leq \beta^{|A|} \cdot \prod_{ \substack{ x,y \in A \\ x<y } } \big( 1 + h(x,y) \cdot (\beta^{-2} - 1) \big),$$
where $h(x,y) = \Prob( \exists t \ge 0: |Y^x_t - Y^y_t|_1 = 1 )$. 
\end{lemma}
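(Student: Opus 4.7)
The plan is to reproduce the annihilation argument behind Lemma~5.1 of~\cite{RathValesin2017}, modified as the authors flag: two particles annihilate the first time they occupy $\ell_1$-neighbouring sites, with ties between several simultaneously neighbouring particles resolved by annihilating the smallest pair under the ordering $<$ of~\eqref{eq_ordering}. On the same graphical construction I would define an \emph{annihilating-stirring} system $(\tilde S_t(A))_{t\ge 0}$ started from $A$, driven by the same Poisson processes $\mathcal J^{(x,y)}$ and $\mathcal H^{\{x,y\}}$ as $S_t(A)$, but with both participants of any would-be neighbour collision removed. Let $\mathcal M \subseteq \binom{A}{2}$ be the random matching whose edges are the annihilating pairs; by construction $|\tilde S_\infty(A)| = |A| - 2|\mathcal M|$.

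Coupling the two systems in the natural way, one has $|\tilde S_\infty(A)| \le |S_\infty(A)|$, because an annihilation deletes two particles while the coupled coalescence deletes only one, and no subsequent dynamics can reverse this. Since $\beta \in (0,1)$ this yields $\beta^{|S_\infty(A)|} \le \beta^{|\tilde S_\infty(A)|}$, hence $\E[\beta^{|S_\infty(A)|}] \le \E[\beta^{|\tilde S_\infty(A)|}]$. Writing $\beta^{-2} = 1 + (\beta^{-2}-1)$ and expanding over sub-matchings of $\mathcal M$ gives
\[
\E[\beta^{|\tilde S_\infty(A)|}] = \beta^{|A|}\,\E\!\Bigl[\prod_{\{x,y\}\in\mathcal M}\beta^{-2}\Bigr] = \beta^{|A|}\sum_{\mathcal M'}(\beta^{-2}-1)^{|\mathcal M'|}\,\Prob(\mathcal M'\subseteq\mathcal M),
\]
where the last sum runs over matchings $\mathcal M'$ on $A$. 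On the other hand, the target right-hand side of the lemma expands as
\[
\beta^{|A|}\prod_{x<y}\!\bigl(1+h(x,y)(\beta^{-2}-1)\bigr) = \beta^{|A|}\sum_{S\subseteq\binom{A}{2}}(\beta^{-2}-1)^{|S|}\prod_{\{x,y\}\in S}h(x,y),
\]
and since $\beta^{-2}-1\ge 0$ the terms with non-matching $S$ are non-negative. The lemma therefore reduces to
\[
\Prob(\mathcal M'\subseteq\mathcal M)\le \prod_{\{x,y\}\in\mathcal M'}h(x,y) \qquad\text{for every matching } \mathcal M' \subseteq \binom{A}{2}.
\]

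This last display is the main obstacle. For a single pair $\mathcal M' = \{\{x,y\}\}$ it is immediate, since the event that $x,y$ annihilate in the coupled dynamics is contained in $\{\exists t:|Y^x_t-Y^y_t|_1=1\}$, which has probability $h(x,y)$. For a general matching $\mathcal M' = \{\{x_1,y_1\},\dots,\{x_k,y_k\}\}$ I would argue by revealing trajectories sequentially: fix an order on the edges of $\mathcal M'$ and, at each step, condition on the Poisson data that determined the annihilations of the previously processed pairs. Under this conditioning, the trajectories of $x_j$ and $y_j$ in the annihilating system coincide with the corresponding trajectories in the full coalescing-stirring construction up to the first time either would be involved in an annihilation, so the event ``$x_j$ and $y_j$ annihilate each other'' is contained in the event ``$Y^{x_j}$ and $Y^{y_j}$ are nearest neighbours at some time''. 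The symmetry of the stirring and coalescence jumps under relabelling of particles (together with the $<$-based tie-breaking, which keeps the matching well defined when three or more particles become neighbours simultaneously) guarantees that the conditional probability of the latter event is still at most $h(x_j,y_j)$, and iterating the conditioning yields the required product bound. This is precisely the step where replacing ``collision'' by ``nearest-neighbour contact'' in the RV17 argument preserves the structure of the proof, and the rest of the verification is bookkeeping.
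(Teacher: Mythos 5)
Your reduction is the same one the paper has in mind (it is the route of Lemma~5.1 of R\'ath--Valesin, which the authors invoke): pass to an annihilating system on the same graphical construction, note $|\tilde S_\infty(A)|\le |S_\infty(A)|$, expand $\beta^{-2|\mathcal M|}$ over sub-matchings, and reduce everything to the bound $\Prob(\mathcal M'\subseteq\mathcal M)\le \prod_{\{x,y\}\in\mathcal M'}h(x,y)$ for a fixed matching $\mathcal M'$. (For the coupling inequality, the clean justification is not ``two deletions versus one'' but the observation that two living annihilating particles can never occupy the same site -- they would have to become nearest neighbours first and would then die -- so the map from living particles to their positions is an injection into $S_t(A)$.) Up to that point the proposal is correct and matches the intended argument.

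The gap is in the one step you yourself single out as the main obstacle: the product bound for $\Prob(\mathcal M'\subseteq\mathcal M)$ is asserted, not proved. Your sequential-conditioning sketch hinges on the claim that, given ``the Poisson data that determined the annihilations of the previously processed pairs,'' the conditional probability that $Y^{x_j}$ and $Y^{y_j}$ ever become nearest neighbours is still at most $h(x_j,y_j)$, and you justify this by ``symmetry under relabelling.'' That is not a valid argument: all trajectories are built from the same Poisson field, and walkers from different pairs genuinely interact (shared stirring clocks once adjacent, coalescence of trajectories), so conditioning on the history of earlier pairs does change the law of the pair $(Y^{x_j},Y^{y_j})$; indeed the events $\{\exists t:|Y^{x_i}_t-Y^{y_i}_t|_1=1\}$ for disjoint pairs can be positively correlated (e.g.\ after cross-pair coalescences), so one cannot simply multiply their probabilities. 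The mechanism that actually closes this step -- and the whole point of the ``annihilate upon becoming nearest neighbours'' modification in the stirring setting -- is that living particles of the annihilating system are never adjacent, hence up to any annihilation time they evolve as \emph{independent} rate-$(1+\mathsf v)$ random walks. One then argues by induction on $|A|$, conditioning at the first annihilation time via the strong Markov property of the graphical construction (dead particles do not influence the trajectories of the living ones), and compares $h$ evaluated at the random positions of the not-yet-annihilated prescribed pairs at that time with $h$ at their initial positions by optional stopping: for disjoint pairs of independent walks the processes $h(Y^{x_i}_t,Y^{y_i}_t)$ are independent bounded supermartingales, so their product is a supermartingale. Without this independence-before-annihilation observation and the supermartingale/optional-stopping (or an equivalent) step, the iteration you describe does not go through, and this is precisely the content that the lemma's proof has to supply.
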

\begin{remark}\label{remark_value_h_c_1}
    Note that $h(x,y)$ is independent of~$\mathsf v$ and that, by a Green function estimate (see for instance~\cite{Lawler2010}),~$h(x,y) \leq c_\mathrm{nn} \cdot |x-y|^{2-d}_1$, where~$c_\mathrm{nn} > 0$ is a dimension-dependent constant.
\end{remark}

With this lemma at hand, we begin the comparison between~$\mu_{\alpha,\mathsf v}$ and~$\pi_\alpha$. Recall that~$\widehat{\alpha} = \min\{\alpha, 1-\alpha \}$.

\begin{lemma}\label{lemma_annihilation_2}
Let~$\Lambda \subset \Z^d$ be finite. Then, for any~$E \in \mathcal G_\Lambda$,~$\alpha \in (0,1)$ and~$\mathsf v \ge 0$ we have
$$\mu_{\alpha,\mathsf v}( E ) \leq \pi_\alpha(E) \cdot \prod_{ \substack{ x,y \in \Lambda \\ x<y } } \big( 1 + h(x,y) \cdot (\widehat{\alpha}^{-2} - 1) \big).$$ 
\end{lemma}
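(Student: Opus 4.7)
The plan is to reduce the general cylinder event to singleton cylinders and then apply Lemma~\ref{lemma_annihilation} once, with $\beta = \widehat\alpha$. Writing $E_\sigma := \{\xi : \xi|_\Lambda = \sigma\}$ for $\sigma \in \{0,1\}^\Lambda$, the event $E$ is the disjoint union $\bigsqcup_{\sigma \in E_0} E_\sigma$, so it suffices to prove the stated bound for each $E_\sigma$ individually and then sum over $\sigma \in E_0$, since $\pi_\alpha(E) = \sum_{\sigma \in E_0} \pi_\alpha(E_\sigma)$.

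For a fixed $\sigma$, I would exploit the graphical construction from Section~\ref{ss_proof_proposition}: the coalescing-stirring particles induce a random partition $\mathscr C_\Lambda$ of $\Lambda$, whose number of blocks equals $|S_\infty(\Lambda)|$, and conditionally on $\mathscr C_\Lambda$ each block receives an independent $\mathrm{Bernoulli}(\alpha)$ label that is assigned to all of its vertices. Hence $\{\xi^{(\alpha)}|_\Lambda = \sigma\}$ occurs exactly when every block of $\mathscr C_\Lambda$ is $\sigma$-monochromatic (call this event $\mathrm{Comp}(\sigma)$) and the block labels match $\sigma$. Writing $n_i := |\sigma^{-1}(i)|$, $M_i(\sigma) :=$ number of blocks whose common $\sigma$-value is $i \in \{0,1\}$, $D_i := n_i - M_i(\sigma) \geq 0$, and $D := D_0 + D_1 = |\Lambda| - |S_\infty(\Lambda)|$, this yields
\[ \mu_{\alpha,\mathsf v}(E_\sigma) = \E\!\left[\mathds 1_{\mathrm{Comp}(\sigma)}\,\alpha^{M_1(\sigma)}(1-\alpha)^{M_0(\sigma)}\right] = \pi_\alpha(E_\sigma)\cdot \E\!\left[\mathds 1_{\mathrm{Comp}(\sigma)}\,\alpha^{-D_1}(1-\alpha)^{-D_0}\right]. \]

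Bounding $\alpha^{-1},(1-\alpha)^{-1}\leq \widehat\alpha^{-1}$ and dropping the indicator gives the $\sigma$-independent estimate $\mu_{\alpha,\mathsf v}(E_\sigma) \leq \pi_\alpha(E_\sigma)\cdot \widehat\alpha^{-|\Lambda|}\,\E[\widehat\alpha^{|S_\infty(\Lambda)|}]$. A single invocation of Lemma~\ref{lemma_annihilation} with $\beta = \widehat\alpha \in (0,\tfrac12]$ and $A = \Lambda$ then gives $\E[\widehat\alpha^{|S_\infty(\Lambda)|}] \leq \widehat\alpha^{|\Lambda|}\prod_{x<y}\bigl(1 + h(x,y)(\widehat\alpha^{-2}-1)\bigr)$, so the $\widehat\alpha^{-|\Lambda|}$ factor cancels, and summing over $\sigma \in E_0$ completes the argument. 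No real obstacle arises; the only conceptual point is the substitution $\alpha,(1-\alpha)\mapsto \widehat\alpha$, which decouples the bound from $\sigma$ so that one application of Lemma~\ref{lemma_annihilation} handles every $\sigma$ simultaneously rather than requiring a separate treatment per configuration.
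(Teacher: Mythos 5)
Your proof is correct and follows essentially the same route as the paper: fixing a single configuration on $\Lambda$ (your $E_\sigma$ is the paper's partition $A=\sigma^{-1}(1)$, $B=\sigma^{-1}(0)$, and your event $\mathrm{Comp}(\sigma)$ is its event $\mathcal D$), expressing the probability through the coalescence partition of the graphical construction, replacing $\alpha$ and $1-\alpha$ by $\widehat\alpha$ to decouple from $\sigma$, invoking Lemma~\ref{lemma_annihilation} once with $\beta=\widehat\alpha$ and $A=\Lambda$, and summing over $\sigma$. The only difference is cosmetic bookkeeping (block counts $M_i$, $D_i$ versus $|S_\infty(A)|$, $|S_\infty(B)|$).
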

\begin{proof}
    Let~$A,B \subseteq \Lambda$ such that they form a partition of~$\Lambda$. Using the graphical construction introduced earlier and the random configuration ~$\xi^{(\alpha)}\sim \mu_{\alpha,\mathsf v}$ from~\eqref{def_xi_alpha}, we have that
    \begin{align*}
        \mu_{\alpha,\mathsf v}(\xi: \xi \equiv 1 \text{ on } A, \; \xi \equiv 0 \text{ on } B) & =  \Prob( \xi^{(\alpha)} \equiv 1 \text{ on } A, \; \xi^{(\alpha)} \equiv 0 \text{ on } B ) \\[0.2cm]
        & = \E[ \alpha^{|S_\infty(A)|} \cdot (1-\alpha)^{|S_\infty(B)|} \cdot \mathds{1}_\mathcal{D} ],    
    \end{align*}
    where~$\mathcal D$ is the event that the particles~$\{(Y^x_t)_{t \ge 0}: x \in A \}$ do not coalesce with the particles~$\{(Y^x_t)_{t \ge 0}: x \in B \}$, although they may exchange positions. Hence,
    \begin{align*}
        \E[ \alpha^{|S_\infty(A)|} \cdot (1-\alpha)^{|S_\infty(B)|} \cdot \mathds{1}_\mathcal{D} ] & \leq \alpha^{|A|} \cdot (1-\alpha)^{|B|} \cdot \E \Big[ \Big(\frac{1}{\widehat{\alpha}} \Big)^{|A| + |B| -|S_\infty(A)| - |S_\infty(B)|} \cdot \mathds{1}_\mathcal{D} \Big] \\[0.2cm]
        & \leq \alpha^{|A|} \cdot (1-\alpha)^{|B|} \cdot \E \Big[ \Big( \frac{1}{\widehat{\alpha}} \Big)^{|A \cup B|-|S_\infty(A \cup B)|} \Big] \\[0.2cm]
        & \leq \alpha^{|A|} \cdot (1-\alpha)^{|B|} \cdot \prod_{ \substack{ x,y \in \Lambda \\ x<y } } \big( 1 + h(x,y) \cdot (\widehat{\alpha}^{-2} - 1) \big), 
    \end{align*}
where the last inequality follows from Lemma~\ref{lemma_annihilation} with~$\beta = \widehat{\alpha}$. Then, the statement of the lemma readily follows. 
\end{proof}

Given a set~$\Lambda \subseteq \Z^d$ and a fixed configuration~$\eta \in \{0,1\}^\Lambda$, for every~$\zeta \in \{0,1\}^{\Z^d}$ we define the configuration~$\zeta \vee \eta \in \{0,1\}^{\Z^d}$ by
$$ (\zeta \vee \eta )(x) := \left\{ \begin{array}{ll}
    \zeta(x) & x \notin \Lambda \\[0.2cm]
    \max\{\zeta(x),\eta(x)\} & x \in \Lambda.
\end{array} \right. $$
We define~$\zeta \wedge \eta$ analogously. In the space of the graphical construction we define the $\sigma$-algebra~$\mathcal F_t := \sigma\{ (Y^x_s)_{s \in [0,t]}: x \in \Z^d \}$ for any~$ t \ge 0$. 

\begin{lemma}\label{lemma_mu_stir_pi_alpha}
    Let~$\Lambda \subset \Z^d$ be finite and~$E,F \in \mathcal G_\Lambda$ be an increasing and a decreasing event, respectively. In the space of the graphical construction, we consider an~$\mathcal F_t$-measurable random configuration~$\eta \in \{0,1\}^\Lambda$ with the property that the coalescing random walks started at~$\{x \in \Lambda: \eta(x) = 0\}$ do not coalesce until time~$t$. Then for any~$\alpha \in (0,1)$ and~$\mathsf v \ge 0$
    \begin{align*}
        & \mu_{\alpha,\mathsf v}(E) \leq \E \Big[ \pi_\alpha(\zeta: \zeta \vee \eta \in E) \cdot \prod_{ \substack{ x,y \in \Lambda \setminus \Xi \\ x< y} } \big( 1+ h(Y^x_t,Y^y_t) \cdot (\widehat{\alpha}^{-2}-1) \big) \Big], \\[0.2cm]
        & \mu_{\alpha,\mathsf v}(F) \leq \E \Big[ \pi_\alpha(\zeta: \zeta \wedge (1-\eta) \in F) \cdot \prod_{ \substack{ x,y \in \Lambda \setminus \Xi \\ x< y} } \big( 1+ h(Y^x_t,Y^y_t) \cdot (\widehat{\alpha}^{-2}-1) \big) \Big].
    \end{align*}
    where~$\Xi = \Xi(\eta) = \{x \in \Lambda : \eta(x) = 1 \}$.
\end{lemma}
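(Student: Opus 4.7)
The plan is to combine three ingredients: a stochastic domination step to replace $\xi^{(\alpha)}$ by $\xi^{(\alpha)} \vee \eta$ (for the increasing event $E$) or $\xi^{(\alpha)} \wedge (1-\eta)$ (for the decreasing event $F$); conditioning on $\mathcal{F}_t$ to recast things in terms of a ``restarted'' coalescing-stirring system; and a conditional application of Lemma~\ref{lemma_annihilation_2} to that restarted system with effective starting positions $(Y^x_t)_{x \in \Lambda \setminus \Xi}$.

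For the increasing case, since $\xi^{(\alpha)} \le \xi^{(\alpha)} \vee \eta$ pointwise, monotonicity gives $\mu_{\alpha,\mathsf v}(E) \le \Prob(\xi^{(\alpha)} \vee \eta \in E) = \E[\Prob(\xi^{(\alpha)} \vee \eta \in E \mid \mathcal{F}_t)]$; the analogue for $F$ uses $\xi^{(\alpha)} \wedge (1-\eta) \le \xi^{(\alpha)}$. Conditionally on $\mathcal{F}_t$, both $\eta$ and $\Xi$ are deterministic, and $(\xi^{(\alpha)} \vee \eta)|_\Lambda$ equals $1$ on $\Xi$ and $\xi^{(\alpha)}|_{\Lambda \setminus \Xi}$ on $\Lambda \setminus \Xi$. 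Writing $E = \{\xi : \xi|_\Lambda \in E_0\}$, the event reduces to $\xi^{(\alpha)}|_{\Lambda \setminus \Xi} \in \widetilde E(\eta)$, where $\widetilde E(\eta) := \{\tau \in \{0,1\}^{\Lambda \setminus \Xi} : (\tau, 1_\Xi) \in E_0\}$ is increasing; a direct check shows $\pi_\alpha(\widetilde E(\eta)) = \pi_\alpha(\zeta : \zeta \vee \eta \in E)$.

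Next, by the Markov property of the graphical construction, conditionally on $\mathcal{F}_t$ the future trajectories $(Y^x_{t+s})_{s \ge 0,\, x \in \Z^d}$ form an independent coalescing-stirring system started from $(Y^x_t)_{x \in \Z^d}$, with coalescences in $[0,t]$ already recorded as coincident starting positions. Applying Lemma~\ref{lemma_annihilation_2} to this fresh system, regarded as indexed by the labels $\Lambda \setminus \Xi$ with effective starting positions $(Y^x_t)_{x \in \Lambda \setminus \Xi}$ and the increasing event $\widetilde E(\eta)$, yields
\[\Prob\bigl(\xi^{(\alpha)}|_{\Lambda \setminus \Xi} \in \widetilde E(\eta) \mid \mathcal{F}_t\bigr) \le \pi_\alpha\bigl(\widetilde E(\eta)\bigr) \prod_{\substack{x,y \in \Lambda \setminus \Xi \\ x<y}} \bigl(1 + h(Y^x_t, Y^y_t)(\widehat{\alpha}^{-2}-1)\bigr),\]
and taking expectation over $\mathcal{F}_t$ gives the first stated inequality. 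The bound for $F$ is proved identically, with the decreasing event $\widetilde F(\eta) := \{\tau : (\tau, 0_\Xi) \in F_0\}$ in place of $\widetilde E(\eta)$.

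The main technical obstacle I anticipate is in the conditional application of Lemma~\ref{lemma_annihilation_2}: the original statement concerns a finite \emph{set} of starting positions in $\Z^d$, while in the conditional setting $(Y^x_t)_{x \in \Lambda \setminus \Xi}$ is genuinely a multiset, because distinct labels $x,y \in \Lambda \setminus \Xi$ may satisfy $Y^x_t = Y^y_t$ when they have coalesced by time $t$. Such coincidences force $\xi^{(\alpha)}(x) = \xi^{(\alpha)}(y)$, a correlation not reflected in the trivial factor $1 + h(z,z)(\widehat{\alpha}^{-2}-1) = 1$. Making this rigorous requires rerunning the annihilation computation underlying Lemma~\ref{lemma_annihilation_2} in the ``restart at time $t$'' setting, carefully tracking the block structure generated by coincident starting positions; equivalently, the cleanest route is to exploit the freedom in choosing $\eta$ so that $\Xi$ absorbs all coincident labels, reducing to the case in which the effective starting positions are pairwise distinct.
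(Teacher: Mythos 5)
Your argument is the same as the paper's: monotonicity to pass from $\xi^{(\alpha)}$ to $\xi^{(\alpha)}\vee\eta$ (resp.\ $\xi^{(\alpha)}\wedge(1-\eta)$), rewriting the event as $E_\eta=\{\xi:\xi\vee\eta\in E\}\in\mathcal G_{\Lambda\setminus\Xi}$, conditioning on $\mathcal F_t$, and invoking the Markov property together with Lemma~\ref{lemma_annihilation_2} for the restarted coalescing--stirring system; the paper compresses exactly this into the step $(\ast)$ of \eqref{prob_condi_markov_incr}.

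The obstacle you flag at the end is genuine, and the paper's proof passes over it silently, so do not expect that ``rerunning the annihilation computation'' alone will close it: if two labels $x<y$ in $\Lambda\setminus\Xi$ satisfy $Y^x_t=Y^y_t$, then by \eqref{def_xi_alpha} they carry the same Bernoulli value, while the corresponding factor is $1+h(Y^x_t,Y^y_t)(\widehat{\alpha}^{-2}-1)=1$ since $h(z,z)=0$, and the stated bound can actually fail. Concretely, take $\Lambda=\{x,y\}$ with $x\sim y$, $E=\{\xi\equiv 1\text{ on }\Lambda\}$, $\eta\equiv 0$: the left-hand side is $\alpha^2+\alpha(1-\alpha)\,\Prob(Y^x,Y^y\text{ coalesce})$, while the right-hand side equals $\alpha^2+\alpha^2(\widehat{\alpha}^{-2}-1)\,\Prob(\exists s\ge t:|Y^x_s-Y^y_s|_1=1)$, which tends to $\alpha^2$ as $t\to\infty$ in $d\ge 3$; so for large $t$ the inequality is violated. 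Hence the lemma needs either the hypothesis that the positions $(Y^x_t)_{x\in\Lambda\setminus\Xi}$ are pairwise distinct (equivalently, that $\Xi$ absorbs every label that has coalesced with an earlier one), or an extra factor of $\widehat{\alpha}^{-1}$ per absorbed label (a block of $k$ merged labels has conditional density at most $\widehat{\alpha}^{-(k-1)}$ with respect to $\pi_\alpha$). Your second suggested fix is the right one and is exactly what the application provides: in the next subsection the configuration $\eta_2$ places in $\Xi$ every $x$ with $|Y^x_T-Y^y_T|\le \ell^{1/4}$ for some $y<x$, in particular every coalesced label, so the use of the lemma there is unaffected; but as stated, for an arbitrary $\mathcal F_t$-measurable $\eta$, the claim (and the paper's step $(\ast)$) requires this amendment.
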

\begin{proof}
We only prove the first inequality, as the second one is treated in the same way. We work on a probability space where a system of coalescing-stirring particles~$\{Y^x_s: x \in \mathbb Z^d,\; 0 \le s \le t\}$ is defined from Poisson instructions as explained earlier, and let~$(\mathcal F_s)_{0 \le s \le t}$ be the natural filtration of these Poisson processes. As in the statement, we let~$\eta \in \{0,1\}^\Lambda$ be~$\mathcal F_t$-measurable. We abbreviate
\[
\mathcal Y(x):=Y^x_t,\qquad x \in \Lambda.
\]

We augment the probability space by defining~$\xi \sim \mu_{\alpha,\mathsf v}$ independent of the Poisson processes. It follows from duality and stationarity that~$\xi \circ \mathcal Y$ (defined as the element of~$\{0,1\}^{\mathbb Z^d}$ given by~$x \mapsto \xi(\mathcal Y(x))$ has law~$\mu_{\alpha,\mathsf v}$. We can then write
\[
\mu_{\alpha,\mathsf v}(E) = \mathbb P(\xi \circ \mathcal Y \in E) = \mathbb E[ \mathbb P(\xi \circ \mathcal Y \in E  \mid \mathcal F_t)].
\]
Since~$E$ is increasing, we can bound
\[
\mathbb P(\xi \circ \mathcal Y \in E  \mid \mathcal F_t) \le \mathbb P( (\xi \circ \mathcal Y) \vee \eta \in E \mid \mathcal F_t).
\]
We write the right-hand side above as~$\mathbb P( \xi \in E' \mid \mathcal F_t)$, where~$E'$ is the set of values of~$\xi$ for which~$(\xi \circ \mathcal Y) \vee \eta \in E$ (note that~$E'$ is a random set, as it depends on~$\mathcal Y$ and~$\eta$, but since we are conditioning on~$\mathcal F_t$, we treat it as deterministic).
We now augment the probability space further by letting~$\zeta$ be random element of~$\{0,1\}^{\mathbb Z^d}$ distributed as~$\pi_\alpha$. By Lemma~\ref{lemma_annihilation_2} we have
\[
\mathbb P(\xi \in E' \mid \mathcal F_t) \le \mathbb P(\zeta \in E' \mid \mathcal F_t) \cdot \prod_{\substack{x,y \in \mathcal Y(\Lambda \backslash \Xi)\\ x<y}} \left(1+h(Y^x_t,Y^y_t)\cdot (\widehat\alpha^{-2}-1)\right).
\]
Finally, using the fact that~$\mathcal Y$ is one-to-one on~$\Lambda \backslash \Xi$, and~$\zeta$ is independent of~$\mathcal F_t$, it is easy to see that
\[\mathbb P(\zeta \in E' \mid \mathcal F_t) = \mathbb P((\zeta \circ \mathcal Y) \vee \eta \in E \mid \mathcal F_t) = \pi_\alpha(\zeta: \zeta \vee \eta \in E).\]

\end{proof}

\subsection{Translation of increasing and decreasing events}\label{ss_translation_inc_dec}
Throughout this section, we fix~$N \in \N$ and a proper embedding~$\mathcal T \in \mathcal P_N$. Recall the sublattices in~\eqref{scales_mathcal_L}. Note that the renormalization scheme we are adopting is based on random configurations defined on boxes of scale~$L$ centered at the leaves~$\{ \mathcal T(m): m \in T_{(N)} \}$.

We now fix~$L, \ell \in \N$ with~$\ell \ge 6$,~$\mathsf v \in [0,\infty)$ and~$T \in (0,\infty)$. In the space of the graphical construction we define the random configurations~$\eta^m_1, \eta^m_2 \in \{0,1\}^{B(\mathcal T(m),2L)}$ for every leaf~$m \in T_{(N)}$ by
\begin{align*}
    & \eta^m_1(x) := \mathds{1} \big\{ \exists y \in B(\mathcal T(m),2L):  \max_{0 \leq t \leq T}|Y^y_t - y| \ge \ell/4  \\
    & \hspace{5cm} \text{ or } (Y^y_t)_{0 \leq t \leq T} \text{ performs at least one autonomous jump} \big\}, \\[0.2cm]
    & \eta^m_2(x) := \mathds{1}{\{ \exists y \in B(\mathcal T(m), 2L): y < x, \; |Y^x_T - Y^y_T| \leq \ell^{1/4} \}}, \quad\quad x \in B(\mathcal T(m), 2L).
\end{align*}
In words,~$\eta^m_1$ is a configuration that is identically 1 or 0 on the box~$B(\mathcal T(m), 2L)$. This depends on whether, among the coalescing-stirring particles that start in the ball~$B(\mathcal T(m),2L)$, there is at least one that either ``moves too much'' before time~$T$ (in the sense that it reaches a distance at least~$\ell/4$ away from where it started), or that performs an autonomous jump before time~$T$.
The configuration~$\eta^m_2$ is obtained by inspecting the positions~$\{Y^{x}_T: x\in B(\mathcal T(m), 2L) \}$ one by one in the order given by the ordering~$<$ of~\eqref{eq_ordering} and assigning state~1 to a site~$z \in B(\mathcal T(m), 2L)$ whenever~$Y^z_T$ is ``too close'' (at distance~$\leq\ell^{1/4}$) to a previously inspected position.

Letting
\begin{equation}\label{Lambda_def}
    \Lambda := \bigcup_{m \in T_{(N)}} B(\mathcal T(m),2L),
\end{equation}
we also define the random configurations~$\xi_1,\xi_2 \in \{0,1\}^\Lambda$ by
\begin{equation}\label{xi_1_x_2_def}
\xi_1|_{B(\mathcal T(m),2L )}  = \eta^m_1,\quad \xi_2|_{B(\mathcal T(m),2L )} =  \eta^m_2 \quad \text{for every } m \in T_{(N)}.    
\end{equation}
Note that these configurations are well defined as the sets~$\{B(\mathcal T(m),2L)$,~$m \in T_{(N)}\}$ are disjoint by~\eqref{spread_out_all_scales}. 

In the proof of Lemma~\ref{lemma_decomposition_main} we will need to bound terms of the form
\begin{equation}\label{eq_mu_incr_and_decr}
    \mu_{\alpha,\mathsf v} \Big( \bigcap_{m \in T_{(N)}} \mathcal \theta_{\mathcal T(m)}E \Big) \quad \text{and} \quad \mu_{\alpha,\mathsf v} \Big( \bigcap_{m \in T_{(N)}} \mathcal \theta_{\mathcal T(m)}F \Big),
\end{equation}
where~$E,F \in \mathcal G_{B(2L)} $ is an increasing and a decreasing event, respectively.

Observe that~$\xi_1,\xi_2 \in \{0,1\}^\Lambda$ are~$\mathcal F_T$-measurable random configurations so we can use Lemma~\ref{lemma_mu_stir_pi_alpha} with~$\eta=\xi_1\vee\xi_2$ and
\begin{equation}\label{Xi_def_xi12}
    \Xi = \{ x \in \Lambda: \xi_1(x) = 1 \text{ or } \xi_2(x) = 1 \}.
\end{equation}
to obtain bounds on~\eqref{eq_mu_incr_and_decr}. We can further bound these expressions using the following result. Recall the definition of~$c(\ell,L,\alpha,N)$ from~\eqref{c_ell_L_0_alpha_def}.

\begin{lemma}\label{lemma_product_deterministic_limit}
    Let~$\Lambda$ and~$\Xi$ be as in~\eqref{Lambda_def} and~\eqref{Xi_def_xi12}, respectively. Then, for any~$\alpha \in (0,1)$ and~$\mathsf v \ge 0$ we have that
    $$ \prod_{ \substack{ x,y \in \Lambda \setminus \Xi \\ x< y} } \big( 1+ h(Y^x_T,Y^y_T) \cdot (\widehat{\alpha}^{-2}-1) \big) \leq c(\ell,L,\widehat{\alpha},N ) \quad \text{a.s.}$$
\end{lemma}
\begin{proof}
Using the bounds~$1+x \le e^x$ and~$h(x,y) \le c_{\mathrm{nn}} \cdot |x-y|^{2-d}$ from 
Remark~\ref{remark_value_h_c_1}, we have
\begin{equation}\label{cond_exp_F_T}
    \prod_{ \substack{ x,y \in \Lambda \setminus \Xi \\ x< y} } \big( 1+ h(Y^x_T,Y^y_T) \cdot (\widehat{\alpha}^{-2}-1) \big) \leq \prod_{ x \in \Lambda \setminus \Xi} \exp \big\{ c_\mathrm{nn} \cdot \widehat{\alpha}^{-2} \cdot \sum_{ \substack{y \in \Lambda \setminus \Xi \\ y>x} } |Y^{x}_T - Y^{y}_T|^{2-d} \big\}.
\end{equation}

We now work on bounding the sum inside the above expression; to do so, fix $x \in \Lambda \setminus \Xi$. We write
$$\mathcal{V}(x,k) := 
\{y \in \Lambda \setminus \Xi: \; |y-x| \le \ell^k L/2\}, \quad k \ge 1.
$$
Note that $\cup_{k\geq 1} \mathcal V(x, k)=\Lambda \setminus \Xi$. As a consequence of~\eqref{spread_out_all_scales} we have that for any~$k \ge 1$
\begin{equation}\label{ell_k_bound}
    |\mathcal V(x,k)| \stackrel{\eqref{Lambda_def}} \leq |B(2L)| \cdot | \{ m \in T_{(N)}: \mathrm{dist} \big(x, B(\mathcal T(m),2L) \big) \leq \ell^k  L/2 \}| \leq |B(2L)| \cdot 2^{k-1}.
\end{equation}
Then,
\begin{equation}\label{sum_Y_T_complete}
     \sum_{ \substack{ y \in \Lambda \setminus \Xi \\ y>x} } |Y^{x}_T - Y^{y}_T|^{2-d} \leq \sum_{ \substack{ y \in \mathcal V(x,1) \\ y > x } } |Y^{x}_T - Y^{y}_T|^{2-d} + \sum_{k\ge 1}\;\;\sum_{ \substack{ y \in \mathcal V(x,k+1) \backslash \mathcal V(x,k) \\ y > x } } |Y^{x}_T - Y^{y}_T|^{2-d}.
\end{equation}
It is easy to treat the first sum on the right-hand side. Recall that, by the definition of~$\Xi$ in~\eqref{Xi_def_xi12}, if~$y \notin \Xi$ and~$y > x$, then~$|Y^y_T - Y^x_T| \ge \ell^{1/4}$. Hence,
\begin{equation*}
    \sum_{ \substack{ y \in \mathcal V(x,1) \\ y > x } }  |Y^{x}_T - Y^{y}_T|^{2-d} \leq \ell^{(2-d)/4} \cdot |\mathcal V(x,1)| \stackrel{\eqref{ell_k_bound}}\leq |B(2L)| \cdot \ell^{(2-d)/4}.
\end{equation*}

To treat the remaining terms in~\eqref{sum_Y_T_complete} fix~$k \ge 1$ and let~$y \in \mathcal V(x,k+1) \backslash \mathcal V(x,k)$. Since~$x,y \notin \Xi$ the coalescing-stirring particles~$Y^x_t,Y^y_t$ move less than~$\ell/4$ before time~$T$, so we have that~$|Y^x_T - Y^y_T| \ge |x-y|-\ell/2$. Combining this with the fact that~$y > x$ and~$y\notin \mathcal V(x,k)$, we obtain
\begin{equation}\label{stir_distance}
    \big| Y^{x}_T - Y^{y}_T \big| \geq  \frac{\ell^k  L}{2} - \frac{\ell}{2} \ge \ell^k
\end{equation}
for~$L \ge 3$. Hence,
\[
\sum_{ \substack{ y \in \mathcal V(x,k+1) \backslash \mathcal V(x,k) \\ y > x }} |Y^{x}_T - Y^{y}_T|^{2-d} \le  |\mathcal V(x,k+1) \backslash \mathcal V(x,k)|\cdot \ell^{k(2-d)}.
\]
Also using the bound~$|\mathcal V(x,k+1) \backslash \mathcal V(x,k)| \le |\mathcal V(x,k+1)| \le |B(2L)| \cdot 2^k$ by~\eqref{ell_k_bound} and summing over~$k$ we obtain
\[
\sum_{k\ge 1}\;\;\sum_{ \substack{y \in \mathcal V(x,k+1) \backslash \mathcal V(x,k) \\ y > x}} |Y^{x}_T - Y^{y}_T|^{2-d} \le |B(2L)| \cdot \sum_{k\ge 1} (2\ell^{2-d})^k \le 2|B(2L)| \cdot \ell^{2-d} \cdot \sum_{k\ge 0} (2\ell^{2-d})^k.
\]
Plugging the bounds we have obtained back into~\eqref{sum_Y_T_complete}, we have proved that
\begin{equation}
    \begin{aligned}
    \sum_{ \substack{ y \in \Lambda \setminus \Xi \\ y>x} } |Y^{x}_T - Y^{y}_T|^{2-d} & \le |B(2L)| \cdot \ell^{(2-d)/4} + 2|B(2L)| \cdot \ell^{2-d} \cdot \sum_{k\ge 0} (2\ell^{2-d})^k \\[0.2cm]
    & \leq |B(2L)| \cdot \ell^{(2-d)/4} \cdot \Big( 1 + 2 \cdot \sum_{k \ge 0} (2 \ell^{2-d})^k \Big) \\[0.2cm]
    & \leq 5 \cdot |B(2L)| \cdot \ell^{(2-d)/4},
    \end{aligned}
\end{equation}
where the last inequality follows from the assumption~$\ell \ge 6$. In fact, the argument only requires that~$\ell \ge 4$. We conclude by putting this back into~\eqref{cond_exp_F_T} and noting that~$|\Lambda \setminus \Xi| \leq |\Lambda| = |B(2L)| \cdot 2^N$.
\end{proof}

Therefore, applying Lemma~\ref{lemma_mu_stir_pi_alpha} and Lemma~\ref{lemma_product_deterministic_limit} gives the following statement. Let~$E,F \in \mathcal G_{B(2L)} $ be an increasing and a decreasing event, respectively. Then
\begin{equation}\label{ineq_mu_stir_int_C_m}
    \begin{aligned}
        & \mu_{\alpha,\mathsf v} \Big( \bigcap_{m \in T_{(N)}} \mathcal \theta_{\mathcal T(m)}E \Big) \leq c(\ell, L,\widehat{\alpha},N) \cdot \E \Big[ \pi_\alpha(\zeta: \zeta \vee \xi_1 \vee \xi_2 \in \bigcap_{m \in T_{(N)}} \mathcal \theta_{\mathcal T(m)} E ) \Big],\\
        & \mu_{\alpha,\mathsf v} \Big( \bigcap_{m \in T_{(N)}} \mathcal \theta_{\mathcal T(m)}F \Big) \leq c(\ell, L,\widehat{\alpha},N) \cdot \E\Big[ \pi_\alpha(\zeta: \zeta \wedge (1-\xi_1) \wedge (1-\xi_2) \in \bigcap_{m \in T_{(N)}} \mathcal \theta_{\mathcal T(m)} F ) \Big].
    \end{aligned}
\end{equation}

\subsection{Decomposition and stirring particles}\label{ss_decomposition}
In this section, we focus on deriving upper bounds for the expected values appearing on the right-hand side of the inequalities in~\eqref{ineq_mu_stir_int_C_m}. We work in the general setting of Subsection~\ref{ss_translation_inc_dec}.

Given a non-empty increasing event~$ E \in \mathcal G_{B(2L)}$  and the random configurations $\xi_1$ and $\xi_2$ from~\eqref{xi_1_x_2_def} we start by doing the following decomposition: 
\begin{equation}\label{decomposition_1}
    \begin{aligned}
    \E \Big[  \pi_\alpha &\big( \zeta: \zeta \vee \xi_1 \vee \xi_2 \in \bigcap_{m \in T_{(N)}} \theta_{\mathcal T(m)} E \big) \Big] \\
    & = \sum_{A \subseteq T_{(N)}} \E \Big[ \prod_{m \in T_{(N)}\setminus A } \pi_\alpha( \zeta: \zeta \vee \eta^m_2 \in \theta_{\mathcal T(m)} E ) \cdot \mathds{1}\{ \eta^m_1 \equiv 0 \} \cdot \prod_{m \in A} \mathds{1}{ \{ \eta^m_1 \equiv 1 \} } \Big].    
    \end{aligned}
\end{equation}
The equality holds as~$E$ is a non-empty increasing event, meaning the all-1 configuration is included~$\theta_{\mathcal T(m)} E$ for every~$m \in T_{(N)}$.

Let us fix~$A \subseteq T_{(N)}$. From the graphical construction, we see that for each~$m \in T_{(N)}$, both the random variable~$\pi_\alpha( \zeta: \zeta \vee \eta^m_2 \in \theta_{\mathcal T(m)} E ) \cdot \mathds{1}\{ \eta^m_1 \equiv 0 \}$ and the event~$\{\eta^m_1 \equiv 1\}$ can be determined by the Poisson processes in the space-time set
$$ \{ z \in \Z^d: \mathrm{dist}(z,B(\mathcal T(m),2L) ) \leq 1 + \ell/4 \} \times [0,T], \quad m \in T_{(N)}. $$
As a consequence of~\eqref{spread_out_all_scales}, we see that these space-time sets are all disjoint and thus the random elements are independent. It follows that the right-hand side of~\eqref{decomposition_1} equals
$$ \sum_{A \subseteq T_{(N)}} \prod_{m \in T_{(N)}\setminus A } \E \big[ \pi_\alpha( \zeta: \zeta \vee \eta^m_2 \in \theta_{\mathcal T(m)} E ) \cdot \mathds{1}\{ \eta^m_1 \equiv 0 \} \big] \cdot \prod_{m \in A}  \Prob( \eta^m_1 \equiv 1). $$

We now fix~$m_0 \in T_{(N)}$ and define the random configurations~$\eta_1,\eta_2 \in \{0,1\}^{B(2L)}$ by~$\eta_1 = \theta_{-\mathcal T(m_0)} \eta_1^{m_0}$ and~$\eta_2 = \theta_{-\mathcal T(m_0)} \eta_2^{m_0}$ . By the translation invariance of the random variables we have that
\begin{equation}\label{eq_decomposition_result_1}
    \begin{aligned}
    \E \Big[  \pi_\alpha( & \zeta: \zeta \vee \xi_1 \vee \xi_2 \in \bigcap_{m \in T_{(N)}} \theta_{\mathcal T(m)} E ) \Big] \\
    & = \sum_{ A \subseteq T_{(N)} } \big( \E[ \pi_\alpha(\zeta: \zeta \vee \eta_2 \in E) \cdot \mathds{1}\{ \eta_1 \equiv 0 \} ] \big)^{|T_{(N)} \setminus A|} \cdot \big( \Prob(\eta_1 \equiv 1) \big)^{|A|}.  
    \end{aligned}
\end{equation}
Following the same steps analogously, we have that for any~$F \in \mathcal G_{B(2L)}$ decreasing event
\begin{equation}\label{eq_decomposition_result_2}
    \begin{aligned}
    \E \Big[ \pi_\alpha( \zeta: \zeta \wedge & (1-\xi_1) \wedge (1-\xi_2) \in \bigcap_{m \in T_{(N)}} \theta_{\mathcal T(m)} F ) \Big] \\
    & = \sum_{ A \subseteq T_{(N)} } \big( \E[ \pi_\alpha(\zeta: \zeta \wedge (1-\eta_2) \in F) \cdot \mathds{1}\{ \eta_1 \equiv 0 \} ] \big)^{|T_{(N)} \setminus A|} \cdot \big( \Prob(\eta_1 \equiv 1 ) \big)^{|A|}.
\end{aligned}    
\end{equation}

We now state two lemmas, Lemma~\ref{lemma_Psi_1} and Lemma~\ref{lemma_Psi_incr_decr}, to further bound the expressions~\eqref{eq_decomposition_result_1} and~\eqref{eq_decomposition_result_2}. In what follows we set
\begin{equation}\label{T_v0_def}
    T := \frac{\ell}{1+\mathsf{v}} \quad \text{ and } \quad \mathsf{v}_0 := \ell \exp\{\ell\}-1.
\end{equation}

\begin{lemma}\label{lemma_Psi_1}
For~$\mathsf v \ge \mathsf{v}_0$ we have
\begin{align}\label{eq_big_pars}
    \Prob(\eta_1 \equiv 1) \le |B(2L)| \cdot \left(2d \left(1+\frac{d}{4}\right)^{-\ell/8} + \exp\{-\ell\}\right).
\end{align}
\end{lemma}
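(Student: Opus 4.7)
The plan is to combine a union bound over $y \in B(2L)$ with single-walker tail estimates derived directly from the graphical construction. Since $\eta_1$ is identically $0$ or $1$ on $B(2L)$ and has the same law as $\eta_1^{m_0}$ translated to the origin, the event $\{\eta_1 \equiv 1\}$ has the same probability as the event that some $y \in B(2L)$ satisfies: either $(Y^y_t)_{0 \le t \le T}$ performs at least one autonomous jump, or $|Y^y_t - y| \ge \ell/4$ for some $t \in [0,T]$. A union bound then gives
\[
\Prob(\eta_1 \equiv 1) \le |B(2L)| \cdot (P_{\mathrm{aut}} + P_{\mathrm{disp}}),
\]
where $P_{\mathrm{aut}}$ and $P_{\mathrm{disp}}$ are the probabilities, for a walker starting at the origin, of an autonomous jump in $[0,T]$ and of an $\ell^\infty$-displacement reaching $\ell/4$ in $[0,T]$, respectively.

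The bound $P_{\mathrm{aut}} \le e^{-\ell}$ is immediate from the graphical construction: at any position $z$, the walker's autonomous jumps fire at total rate $2d \cdot (1/(2d)) = 1$, so $P_{\mathrm{aut}} = 1 - e^{-T} \le T$. Since $\mathsf v \ge \mathsf v_0 = \ell e^\ell - 1$ and $T = \ell/(1+\mathsf v)$, we get $T \le \ell/(1+\mathsf v_0) = e^{-\ell}$.

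For $P_{\mathrm{disp}}$, a further union bound over the $d$ coordinates and the two signs yields $P_{\mathrm{disp}} \le 2d \cdot \Prob\left(\sup_{0 \le t \le T} X_t \ge \ell/4\right)$, where $X_t$ denotes one coordinate of $Y^0_t$, i.e. a continuous-time simple random walk on $\Z$ with jump rate $(1+\mathsf v)/d$. I would apply Doob's maximal inequality to the nonnegative martingale $M_t = \exp\{\theta X_t - \frac{1+\mathsf v}{d}(\cosh\theta - 1)\, t\}$ for $\theta > 0$, yielding
\[
\Prob\left(\sup_{0 \le t \le T} X_t \ge \ell/4\right) \le \exp\left\{-\frac{\theta \ell}{4} + \frac{\ell}{d}(\cosh\theta - 1)\right\}.
\]

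The one substantive step is choosing $\theta$; I would take $\theta = \ln(1+d/4)$. A short computation gives $\cosh\theta - 1 = d^2/(8(4+d))$, so the exponent becomes $-(\ell/4)\ln(1+d/4) + \ell d/(8(4+d))$. Using the elementary inequality $\ln(1+x) \ge x/(1+x)$ (valid for $x \ge 0$) with $x = d/4$ gives $\ell d/(8(4+d)) \le (\ell/8)\ln(1+d/4)$, so the exponent is at most $-(\ell/8)\ln(1+d/4)$, and $\Prob(\sup_{t\le T}X_t \ge \ell/4) \le (1+d/4)^{-\ell/8}$. Assembling the three pieces yields the claim. The only mild obstacle is identifying the right $\theta$: naively optimizing the Chernoff exponent would leave an extra $e^{c\ell}$ factor, and it is precisely the inequality $\ln(1+x) \ge x/(1+x)$ that cleanly trades the displacement level $\ell/4$ for the $\ell/8$ in the target exponent.
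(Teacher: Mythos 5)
Your proof is correct and follows essentially the same route as the paper: a union bound over $y \in B(2L)$, the identical computation $1-e^{-T} \le T \le e^{-\ell}$ for the autonomous-jump part, and an exponential-martingale maximal inequality for the displacement part. The only difference is that you re-derive the displacement tail coordinate-wise via Doob's inequality with $\theta = \ln(1+d/4)$, whereas the paper simply invokes the ready-made estimate \eqref{coro_rw_mart_bound} (from \cite{RathValesin2017}) at rate-one time $S=(1+\mathsf v)T=\ell$ with $r=\ell/4$, which yields the same bound $2d\left(1+\tfrac{d}{4}\right)^{-\ell/8}$.
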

We will need the following fact stated in Corollary~2.6 of~\cite{RathValesin2017}, which is a consequence of a useful martingale result (see Theorem~\ref{thm_kallemberg} in the Appendix).
\begin{proposition}
    Assume~$d \ge 1$. Let~$(X_t)_{t \ge 0 }$ be a continuous-time simple random walk on~$\Z^d$ of jump rate 1. Assume~$X_0 = 0$. Then, for any $S,r \ge 0$ we have
    \begin{equation}\label{coro_rw_mart_bound}
        \Prob \Big( \max_{0 \leq t \leq S}|X_t| > r \Big) \leq 2d \exp \left\{ - \frac{1}{2}r \ln \Big( 1 + \frac{d \cdot r}{S} \Big) \right\}.
    \end{equation}
\end{proposition}

\begin{proof}[Proof of Lemma~\ref{lemma_Psi_1}]
Recall that $(Y^x_t)_t$ moves as a continuous-time simple random walk with rate $1+\mathsf{v}$. Therefore, using~\eqref{coro_rw_mart_bound} we have for all~$x \in B(2L)$
    \begin{equation*}\begin{split}
    \mathbb P \left( \max_{0\le t \le T} |Y^x_t - x| > \frac{\ell}{4}\right) &\le 2d \exp \left\{ -\frac{1}{8}\ell \ln \left( 1 + \frac{d}{4T(1+\mathsf{v})}\ell \right) \right\}
    \stackrel{\eqref{T_v0_def}}{=} 2d \left( 1+\frac{d}{4}\right)^{-\ell/8}.
    \end{split}
\end{equation*}
Furthermore, as $(Y^x_t)_t$ performs autonomous jumps at rate 1 we have
\begin{equation*}
\begin{split}
   \Prob \Big(  \begin{array}{c}
        (Y^x_t)_{t \in [0,T]} \text{ performs at least} \\
        \text{  one autonomous jump}
    \end{array}  \Big) = 1-e^{-T} \le T \stackrel{\eqref{T_v0_def}}{=} \frac{\ell}{1+\mathsf v} \le \frac{\ell}{1+\mathsf v_0} \stackrel{\eqref{T_v0_def}}{=} \exp\{-\ell\},
\end{split}
\end{equation*}
where in the second inequality we used the assumption that~$\mathsf v \ge \mathsf v_0$.
Applying a union bound over all~$x \in B(2L)$ and using the above inequalities, we obtain~\eqref{eq_big_pars}.
\end{proof}

Recall the definition of~$q = q(\ell,L)$ from~\eqref{q_def}.

\begin{lemma}\label{lemma_Psi_incr_decr}
There exist dimension-dependent constants~$a,b>0$ such that the following holds. Fix~$\ell,L \in \N$ with~$\ell \ge 6$ and~$q(\ell,L) \in (0,1)$. Then, for all~$\mathsf v \ge \mathsf v_0, \alpha \in (0,1)$, all increasing events~$E$ and decreasing events~$F$ in~$\mathcal G_{B(2L)}$, we have
    \begin{align*}
    & \E[ \pi_\alpha(\zeta: \zeta \vee \eta_2 \in E) \cdot \mathds{1}\{ \eta_1 \equiv 0 \} ] \leq \pi_{\alpha + q-\alpha q}(E) + a \cdot |B(2L)|^2\exp \Big\{ -\frac{b \cdot \ell^{1/4}}{|B(2L)|} \Big\}, \\
    & \E[ \pi_\alpha(\zeta: \zeta \wedge (1-\eta_2) \in F) \cdot \mathds{1}\{ \eta_1 \equiv 0 \} ] \leq \pi_{\alpha (1-q)}(F) + a \cdot |B(2L)|^2\exp \Big\{ -\frac{b \cdot \ell^{1/4}}{|B(2L)|} \Big\}.
    \end{align*}
\end{lemma}
With this lemma in hand, we now prove Lemma~\ref{lemma_decomposition_main}, and then return to the proof of the lemma.
\begin{proof}[Proof of Lemma~\ref{lemma_decomposition_main}:]
The term~$\Psi_1$ come from Lemma~\ref{lemma_Psi_1}, the constants~$a,b>0$ and the terms~$\Psi_\mathrm{incr}(E)$ and~$\Psi_\mathrm{decr}(F)$ come from Lemma~\ref{lemma_Psi_incr_decr}, and the value~$\mathsf v_0$ from~\eqref{T_v0_def}. We plug them back in~\eqref{eq_decomposition_result_1} and~\eqref{eq_decomposition_result_2}. We conclude by putting together the resulting inequality and~\eqref{ineq_mu_stir_int_C_m}.
\end{proof}

To prove Lemma~\ref{lemma_Psi_incr_decr} we introduce a system of particles that only perform stirring jumps. Given~$n \in \N$, a \emph{system of~$n$ stirring particles}, denoted by~$(W^1_t,\dots, W^n_t)_{t \ge 0}$, is the Markov chain with state space~$(\Z^d)^n$ and generator given by~$\mathcal L^{(1)}_\mathrm{stir}$ defined in~\eqref{generator_L_v_stirring}.

Note that on~$\{\eta_1 \equiv 0\}$ in the system of coalescing-stirring particles~$\{(Y^x_t)_{0 \leq t \leq T}: x \in B(2L)\}$ the particles only perform stirring jumps at rate~$\mathsf v$. On the probability space~$\mathbf{P}$, we define a system of stirring particles~$\{(W^x_t)_{0 \leq t \leq \mathsf v \cdot T} : x \in B(2L)\}$ such that~$W^x_0 = x$ for all~$x \in B(2L)$. Additionally, we define the random configuration~$\eta^\mathrm{stir}_2$ by
$$ \eta^\mathrm{stir}_2(x) := \mathds{1}\{ \exists y \in B(2L): y < x, \; |W^x_{\mathsf v \cdot T} - W^y_{\mathsf v \cdot T}| \leq \ell^{1/4} \}, \quad x \in B(2L) $$
It follows that
\begin{equation}\label{coupling_pure_stir}
    \begin{aligned}
    & \E[ \pi_\alpha(\zeta: \zeta \vee \eta_2 \in E) \cdot \mathds{1}\{ \eta_1 \equiv 0 \} ] \leq \mathbf{E}[\pi_\alpha(\zeta: \zeta \vee \eta^\mathrm{stir}_2 \in E)], \\[0.2cm]
   & \E[ \pi_\alpha(\zeta: \zeta \wedge (1-\eta_2) \in F) \cdot \mathds{1}\{ \eta_1 \equiv 0 \} ] \leq \mathbf{E}[\pi_\alpha(\zeta: \zeta \wedge (1-\eta^\mathrm{stir}_2) \in F)],        
    \end{aligned}
\end{equation}
where~$\mathbf{E}$ is the expectation operator associated with~$\mathbf{P}$.

We now use the following result to compare the location of the stirring particles with independent random walks. The proof is given in the Appendix.
\begin{proposition}\label{prop_couple_irw_ep}
Assume~$d \ge 3$. There exist dimension-dependent constants~$a,b > 0$ such that the following holds. Given~$k \in \N$, there exists a coupling under a probability measure~$\widehat{\Prob}$ of a system of stirring particles~$(\widehat{W}^1_t,\dots,\widehat{W}^k_t)_{t \ge 0}$ with a system of independent simple random walks~$(\widehat{X}^1_t,\dots,\widehat{X}^k_t)_{t \ge 0}$, each of them of jumping rate~1 and~$\widehat{W}^i_0 = \widehat{X}^i_0 = x_i$, for~$i=1,\dots,k$, such that for any~$r > 0$
    \begin{equation}\label{coupling_exp_A_B}
        \sup_{x_1,\dots,x_k \in \Z^d}\sup_{t \ge 0} \; \widehat{\Prob} \Big(\sup_{1 \leq i \leq k} \big|\widehat{W}^i_t - \widehat{X}^i_t \big| > r \Big) \leq a \cdot k^2 \exp\{-br/k\}.
    \end{equation}
\end{proposition}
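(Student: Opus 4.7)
The plan is to construct, on a single probability space, a coupling of the stirring system $(\widehat{W}^i_t)$ with the independent walks $(\widehat{X}^i_t)$ in such a way that the discrepancies $\widehat{W}^i_t - \widehat{X}^i_t$ can arise only at swap events involving particle $i$ in the stirring dynamics, and then to control the total number of such swaps using the transience of the simple random walk in $d \ge 3$.

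First, I would build $(\widehat{W}^i_t)$ via the standard edge-clock graphical construction: for each unordered edge $\{x,y\}$, an independent Poisson process of rate $1/(2d)$, and the usual ``swap the contents of $x$ and $y$'' rule at every firing. This gives the stirring generator $\mathcal{L}^{(1)}_{\mathrm{stir}}$ exactly and, as a marginal, makes each individual trajectory $\widehat{W}^i$ a rate-$1$ simple random walk. On the same probability space, enlarged by auxiliary independent Poisson clocks, I would then define $(\widehat{X}^i_t)$ so that at every ``free'' jump of $\widehat{W}^i$ (one whose target site is unoccupied) the walk $\widehat{X}^i$ takes the same jump, while at every swap moment involving $\widehat{W}^i$ the jump of $\widehat{X}^i$ is determined by the auxiliary randomness in a way that preserves both the rate-$1$ SRW marginal of $\widehat{X}^i$ and the mutual independence of $\widehat{X}^1,\dots,\widehat{X}^k$.

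With this coupling in place, let $C_{ij}(t)$ denote the number of swaps between particles $i$ and $j$ in $[0,t]$. Since every discrepancy between $\widehat{W}^i$ and $\widehat{X}^i$ can arise only at a swap involving particle $i$, and each such event contributes at most $2$ to the $\ell^1$-distance, I would obtain almost surely
\[
\sup_{t \ge 0}\big|\widehat{W}^i_t - \widehat{X}^i_t\big|_1 \;\le\; 2\sum_{j\ne i} C_{ij}(\infty).
\]
To bound $C_{ij}(\infty)$ I would use that, outside of swap moments, the difference $\widehat{W}^i - \widehat{W}^j$ evolves as a rate-$2$ simple random walk, which is transient in $d \ge 3$: a Green's function estimate gives $\widehat{\mathbb{E}}[C_{ij}(\infty)] \le K_d$ uniformly in the starting positions, and the strong Markov property applied at successive swaps shows that $C_{ij}(\infty)$ is stochastically dominated by a geometric random variable, so $\widehat{\mathbb{E}}[\exp(\lambda_0 C_{ij}(\infty))] \le M$ for constants $\lambda_0, M > 0$ depending only on $d$.

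Finally, a union bound over $i \in \{1,\dots,k\}$ together with a Chernoff estimate applied to $\sum_{j\ne i} C_{ij}(\infty)$ with a tuning parameter of order $1/k$ (chosen so that the product $M^{k-1}$ coming from the exponential moments does not blow up the bound) will yield an estimate of the form $a k^2 \exp(-br/k)$ for suitable dimensional constants $a,b>0$. The step I expect to be the main obstacle is the coupling in the first paragraph: a naive construction that reuses the stirring edge-clocks to drive the $\widehat{X}^i$ either doubles the swap rate (and so does not produce stirring) or destroys the independence of the $\widehat{X}^i$, so the careful splitting/thinning of clocks needed to obtain \emph{simultaneously} the correct stirring generator, $k$ mutually independent rate-$1$ SRWs, and discrepancies localized at swap events is the delicate ingredient. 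Once the coupling is in hand, the remaining Green-function, strong Markov, and Chernoff steps are standard.
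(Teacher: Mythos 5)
Your overall strategy (discrepancies between $\widehat W^i$ and $\widehat X^i$ accumulate only at pair interactions, interactions per pair are controlled by transience in $d\ge 3$, then a union/Chernoff bound at scale $r/k$) is the same in spirit as the paper's, but the proposal has a genuine gap exactly at the point you yourself flag: the coupling is never constructed, and the direction in which you try to build it is the problematic one. If you generate the stirring system first from edge clocks and then try to define $\widehat X^i$ by ``copy the free jumps of $\widehat W^i$, refresh at swap times,'' you run into the following obstruction: at a jump time of $\widehat W^i$, the event that the jump is free is \emph{not} independent of its direction — conditionally on being free, the direction is uniform only over the currently unoccupied neighbouring directions. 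So the mixture ``copied direction if free, fresh uniform if swap'' gives $\widehat X^i$ increments that are biased towards unoccupied directions and correlated with the positions (hence with the $\widehat X^j$) of the other particles; neither the rate-$1$ SRW marginal nor the mutual independence of $\widehat X^1,\dots,\widehat X^k$ comes out of this without a genuinely new idea. Since the proposition \emph{is} the existence of such a coupling with the quantitative bound, deferring this step leaves the proof incomplete.

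The paper resolves this by building the coupling in the opposite order: one first realises the independent walks $\widehat X^1,\dots,\widehat X^k$ from $k$ independent ``instruction manuals'' (a rate-$1$ Poisson clock with i.i.d.\ uniform nearest-neighbour marks for each particle), so independence and the rate-$1$ marginals are automatic, and then defines the stirring system as a deterministic function of the \emph{same} manuals: a particle follows its instruction unless the target site is occupied, in which case the two particles swap. With this construction the discrepancy $\widehat W^i_t-\widehat X^i_t$ decomposes as a sum of pair processes $D_t(i,j)$ that can only move while the pair $(\widehat W^i,\widehat W^j)$ is adjacent; the paper then bounds $D_t(i,j)$ by a martingale maximal inequality (Theorem~\ref{thm_kallemberg}) conditioned on the adjacency local time $I_{i,j}(t)$, which has exponential tails by transience, and concludes with a union bound over the at most $k^2$ pairs at scale $r/k$. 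Your tail-control route (stochastic domination of the number of swaps per pair by a geometric variable, then a Chernoff bound for $\sum_{j\ne i}C_{ij}(\infty)$ with tilt of order $1/k$) would also work once a valid coupling is in hand — note only that the $C_{ij}(\infty)$ are not independent across $j$, so you would need a Hölder-type argument (or simply the per-pair union bound at level $r/k$, as the paper uses) to make the Chernoff step rigorous.
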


We apply Proposition~\ref{prop_couple_irw_ep} with the particles initially located in the ball~$B(2L)$, that is~$k=|B(2L)|$ and with~$r = \ell^{1/4}$. The particles are labeled according to the well-ordering~$<$ from~\eqref{eq_ordering}. We denote~$\mathcal A:=\{\sup_{1 \leq i \leq k} \big|\widehat{W}^i_{\mathsf v \cdot T} - \widehat{X}^i_{\mathsf v \cdot T} \big| > \ell^{1/4}\}$ and define the random configuration~$\widehat{\eta^\mathrm{stir}_2}\in\{0,1\}^{B(2L)}$ (analogously to $\eta^\mathrm{stir}_2$ above) as
$$ \widehat{\eta^\mathrm{stir}_2}(x) := \mathds{1} \big\{ \exists y \in B(2L): y < x, |\widehat{W}^x_{\mathsf{v} \cdot T} - \widehat{W}^y_{\mathsf v \cdot T}| \leq \ell^{1/4} \big\}, \quad x \in B(2L).$$
On the event~$\mathcal A^c$ we have that if~$y< x$ and~$|\widehat{W}^x_{ \mathsf v \cdot T} - \widehat{W}^y_{\mathsf v \cdot T}| \leq \ell^{1/4}$, then
$$ \big| \widehat{X}^x_{\mathsf v \cdot  T}-\widehat{X}^y_{ \mathsf v \cdot T} \big| \leq \big| \widehat{W}^x_{\mathsf v \cdot T}-\widehat{W}^y_{\mathsf v \cdot T} \big| + 2\ell^{1/4} \leq 3\ell^{1/4}.$$
Hence, defining the random configuration~$\eta^\mathrm{rw}_2\in\{0,1\}^{B(2L)}$ as
\begin{equation}\label{eta_rw_def}
    \eta^\mathrm{rw}_2(x) := \mathds{1} \big\{ \exists y \in B(2L): y < x, |\widehat{X}^x_{\mathsf v \cdot T} - \widehat{X}^y_{\mathsf v \cdot T}| \leq 3\ell^{1/4} \big\}, \quad x \in B(2L),
\end{equation}
we have that~$\widehat{\eta^\mathrm{stir}_2}(x) \leq \eta^\mathrm{rw}_2(x)$ on the event $\mathcal A^c$ for every~$x \in B(2L)$. As a consequence, denoting by $\widehat{\E}$ the expectation operator associated with $\widehat{\Prob}$, we have
\begin{equation}\label{eq_binomial_to_ind_rw_incr}
    \begin{aligned}
        \mathbf{E}[\pi_\alpha  (\zeta:  \zeta & \vee \eta^\mathrm{stir}_2 \in E)] = \widehat{\E}[\pi_\alpha(\zeta: \zeta \vee \widehat{\eta^\mathrm{stir}_2} \in E)] \\[0.2cm]
        & \leq \widehat{\Prob}(\mathcal A) + \widehat{\E} \Big[\pi_\alpha(\zeta: \zeta \vee \widehat{\eta^\mathrm{stir}_2} \in E) \cdot \mathds{1}_{\mathcal{A}^c} \Big] \\[0.2cm]
        & \leq a \cdot |B(2L)|^2 \cdot \exp \big\{-b \cdot \ell^{1/4}/|B(2L)| \big\} + \widehat{\E} \Big[\pi_\alpha(\zeta: \zeta \vee \eta^\mathrm{rw}_2 \in E)\Big],
    \end{aligned}
\end{equation}
where the last inequality follows by~$E$ being an increasing event. Analogously,
\begin{equation}\label{eq_binomial_to_ind_rw_decr}
    \begin{aligned}
        \mathbf{E}[\pi_\alpha  (\zeta: \zeta & \wedge (1-\eta^\mathrm{stir}_2) \in F)] \\
        & \leq a \cdot |B(2L)|^2 \cdot \exp \big\{-b \cdot \ell^{1/4}/|B(2L)| \big\} + \widehat{\E} \Big[\pi_\alpha(\zeta: \zeta \wedge (1-\eta^\mathrm{rw}_2) \in F)\Big].
    \end{aligned}
\end{equation}

We now show that the random configuration~$\eta^\mathrm{rw}_2\in\{0,1\}^{B(2L)}$ can be stochastically dominated by a configuration sampled from a product Bernoulli distribution on~$B(2L)$. 
\begin{lemma}\label{lemma_stochastic_domination}
    There is a dimension-dependent constant~$c_d> 0$ such that the following holds. Given~$k \in \N$, let~$x_1,\dots,x_k \in \Z^d$ and let~$(X^1_t,\dots,X^k_t)_{t \ge 0}$ be a system of independent random walks started from~$(x_1,\dots,x_k)$. For fixed~$r > 0$, we write
    \begin{equation}\label{eta_def_lemma_bernoulli}
        \eta(x_n):= \mathds{1} \{ X^n_t \in B(X^1_t,r) \cup \cdots \cup B(X^{n-1}_t,r) \}, \quad n =1,\dots,k.
    \end{equation}
    Then, the random configuration~$\{\eta(x_n): n = 1,\dots,k\}$ is stochastically dominated by a product Bernoulli distribution on~$\{0,1\}^{\{x_1,\dots,x_k\}}$ with density parameter~$k \cdot c_d \cdot r^d \cdot t^{-d/2}$.
\end{lemma}
Before we prove Lemma~\ref{lemma_stochastic_domination}, let us see how it allows us to conclude. We will use the following results. Let~$r,s \in (0,1)$ and~$X,Y$ be independent random variables with distribution~$\mathrm{Ber}(r)$ and~$\mathrm{Ber}(s)$, respectively. Then
\begin{equation}\label{ber_p_q}
    X \vee Y \stackrel{\mathrm{d}}{=} \mathrm{Ber}(r+s-rs) \quad \text{and} \quad X \wedge Y \stackrel{\mathrm{d}}{=} \mathrm{Ber}(rs).
\end{equation}

\begin{proof}[Proof of Lemma~\ref{lemma_Psi_incr_decr}]
    We use Lemma~\ref{lemma_stochastic_domination} with the coalescing-stirring particles initially located in the ball~$B(2L)$, where~$k = |B(2L)|$. Moreover, we set~$t = \mathsf v \cdot T$ and~$r = 3\ell^{1/4}$. We label the particles according to the well-ordering~$<$ from~\eqref{eq_ordering}. Note that~$\eta(x_n)$ from~\eqref{eta_def_lemma_bernoulli} coincide with~$\eta^\mathrm{rw}(x_n)$ from~\eqref{eta_rw_def} for all~$n$. Then, using~\eqref{ber_p_q}, we have that
    \begin{equation}\label{E_hat_ineq}
        \begin{aligned}
        & \widehat{\E} \Big[\pi_\alpha(\zeta: \zeta \vee \eta^\mathrm{rw}_2 \in E)\Big] \leq \pi_{\alpha + q - \alpha q}(E)\\[0.2cm]
        & \widehat{\E} \Big[\pi_\alpha(\zeta: \zeta \wedge (1-\eta^\mathrm{rw}_2) \in F)\Big] \leq \pi_{\alpha (1-q) }(F) 
    \end{aligned}
    \end{equation}
    with density parameter
    \begin{equation}
        |B(2L)| \cdot c_d \cdot (3\ell^{1/4})^d \cdot (\mathsf v \cdot T)^{-d/2} \leq 2^{d/2}\cdot 3^d \cdot c_d \cdot |B(2L)| \cdot \ell^{-d/4} \stackrel{\eqref{q_def}}= q,    
    \end{equation}
where the inequality follows from using the definition of~$T$ in~\eqref{T_v0_def} and noting that~$\tfrac{1}{2} \leq \tfrac{\mathsf v}{1+ \mathsf v}$ for~$\mathsf v \ge \mathsf v_0 \ge 1$. We conclude by putting together \eqref{coupling_pure_stir}, \eqref{eq_binomial_to_ind_rw_incr}, \eqref{eq_binomial_to_ind_rw_decr} and~\eqref{E_hat_ineq}. 
\end{proof}

To prove Lemma~\ref{lemma_stochastic_domination} we recall the \emph{heat kernel bound} for the continuous-time random walk~$(X_t)_{t \ge 0}$, which is a consequence of the local central limit theorem: there exists dimension-dependent constants $C<\infty$ and $c>0$ such that for any~$x,y \in \Z^d$ we have
\begin{equation}\label{ineq_heat_kernel_rw}
     \Prob( X_t = y \mid X_0 = x ) \leq C \cdot t^{-\frac{d}{2}}\exp \left(-c \cdot \frac{|x-y|_1^2}{t} \right).
\end{equation}
\begin{proof}[Proof of Lemma~\ref{lemma_stochastic_domination}]
Set~$c_d:=\sup_{r>0}C \cdot |B(r)| \cdot r^{-d}$, which does not depend on~$r$. To obtain the desired stochastic domination it suffices to prove that  
$$ \Prob( \eta(x_n) = 1 \mid \eta(x_1),\dots,\eta(x_{n-1}) ) \leq k \cdot c_d \cdot r^d \cdot t^{-d/2} \quad \text{a.s.} \; \text{ 
for any } n =2,\dots,k  $$
Note that by definition~$\eta(x_1) = 0$. Fix~$n \ge 2$, we have that
\begin{equation}
    \begin{aligned}
        \Prob( \eta(x_n) = 1 \mid \eta(x_1),\dots,\eta(x_{n-1}) ) & = \Prob \big( X^n_{t} \in B \big(X^1_{t},r \big) \cup \cdots \cup B \big(X^{n-1}_{t},r \big) \mid  X^1_t,\dots,X^{n-1}_t \big) \\[0.2cm]
        & \leq  \sum_{i=1}^{n-1} \Prob( |X^n_t - X^i_t| \leq r \mid X^i_t) \\[0.2cm]
        & \stackrel{(\ast)} \leq (n-1) \cdot |B(r)| \cdot C \cdot t^{-d/2} \leq k \cdot c_d \cdot r^d \cdot t^{-d/2},
    \end{aligned}
\end{equation}
where~($\ast$) follows from the heat kernel bound~\eqref{ineq_heat_kernel_rw}, by upper bounding the exponential term by~$1$. 
\end{proof}

\section{Proof of Theorem~\ref{thm_main_2}: No percolation for $\alpha$ below~$p_c$ and~$\mathsf v$ large}\label{section_4}
Recall the event~$\mathrm{Perc}$ from~\eqref{Perc_def_set}. Given any~$\mathsf v \ge 0$, by property \textbf{(B)} of Lemma~\ref{familia_mu_alpha_properties} we have that all the elements of the family~$\{\mu_{\alpha, \mathsf v}: \alpha \in [0,1]\}$ are ergodic and then
$$ \mu_{\alpha,\mathsf v}(\mathrm{Perc}) \in \{0,1\} \quad \text{for any } \alpha \in [0,1].$$
Also, by property \textbf{(C)} of Lemma~\ref{familia_mu_alpha_properties} the family~$\{\mu_{\alpha,\mathsf v}: \alpha \in [0,1]\}$ is stochastically increasing in~$\alpha$, and since~$\mathrm{Perc}$ is an increasing event this implies the existence of a critical density~$\alpha_c = \alpha_c(\mathsf v) \in [0,1]$ such that~$\mu_{\alpha,\mathsf v}(\mathrm{Perc}) = 0$ for any~$\alpha < \alpha_c$ and~$\mu_{\alpha,\mathsf v}(\mathrm{Perc}) = 1$ for any~$\alpha > \alpha_c$. 
Recall that
$$ p_c = \sup\{p: \pi_p (\mathrm{Perc}) = 0 \} $$
is the critical density of classical Bernoulli site percolation on~$\Z^d$. We aim to prove that
$$ \lim_{\mathsf v \to \infty}\alpha_c(\mathsf v) = p_c. $$
In this section, we establish the lower bound
\begin{equation}\label{eq_part_1_thm_2}
    \liminf_{\mathsf v \to \infty} \alpha_c(\mathsf v) \geq p_c.
\end{equation}
The corresponding upper bound is established in Section~\ref{section_6}.

\begin{proof}[Proof of the lower bound:]
Fix an arbitrary~$\alpha < p_c$, we prove that there is a choice of~$L$ and~$\ell$ in~\eqref{scales_mathcal_L0} such that
\begin{equation}\label{eq_part_1_thm_perc}
    \begin{aligned}
    &\text{there exist } \mathsf v_0 > 0 \text{ such that for any } \mathsf v \ge \mathsf v_0,
    \\
    &\hspace{5cm}\mu_{\alpha,\mathsf v} \big( B(L_N - 2) \xleftrightarrow{\; \xi \;} B(2L_N)^c \big) \leq 2^{-2^N}, \quad N \in \N.    
    \end{aligned}
\end{equation}
As observed in~\eqref{eq_part_1_imply_noperc}, this implies that if~$\mathsf v \ge \mathsf v_0$ we have~$\mu_{\alpha,\mathsf v}(\mathrm{Perc}) = 0$. As the choice of~$\alpha<p_c$ is arbitrary, this further implies~\eqref{eq_part_1_thm_2}.

We now start the work to prove~\eqref{eq_part_1_thm_perc}. For any~$N \in \N$, applying~\eqref{path_anulus}, a union bound and using~\eqref{bound_for_Lambda_N} we have that
\begin{equation}\label{annulus_ineq_max_Lambda_pc}
    \begin{aligned}
     \mu_{\alpha,\mathsf v} \big(  B(L_N & - 2)  \xleftrightarrow{\; \xi \;} B(2L_N)^c \big) \\[2mm]
     & \leq (C_d \cdot \ell^{2d-2})^{2^N} \cdot \max_{\mathcal T \in \mathcal P_N } \mu_{\alpha,\mathsf v} \Big( \bigcap_{m \in T_{(N)}} \{ B(\mathcal T(m),L) \xleftrightarrow{\; \xi \;} B( \mathcal T(m),2 L)^c \} \Big).  
    \end{aligned}
\end{equation}

Fix~$N \in \N$ and a proper embedding~$\mathcal T \in \mathcal P_N$. We set
\begin{equation}\label{L_0_ell_def}
    \ell = L^{16d} \quad \text{and} \quad \mathsf v_0 = \ell \exp(\ell) -1 \; (\text{cf. } \eqref{T_v0_def}).
\end{equation}
With these choices of parameters we can choose $L$ large enough such that~\eqref{eq_part_1_thm_perc} holds in the following way.

Note that~$\{ B(L) \xleftrightarrow{\; \xi \;} B(2L)^c \} \in \mathcal G_{B(2L)}$ is an increasing event and $\{ B(\mathcal T(m),L) \xleftrightarrow{\; \xi \;} B( \mathcal T(m),2 L)^c \}= \theta_{ \mathcal T (m) }  \{ B(L) \xleftrightarrow{\; \xi \;} B(2L)^c \}$. Then, applying Lemma~\ref{lemma_decomposition_main} we have 
\begin{equation}\label{ineq_nontrivial_percolation_phase_transition}
    \begin{aligned}
    \mu_{\alpha,\mathsf v}  \Big(  \bigcap_{m \in T_{(N)}} & \theta_{ \mathcal T (m) }  \{ B(L) \xleftrightarrow{\; \xi \;} B(2L)^c \}  \Big)  \\
    & \leq c(\ell,L,\widehat{\alpha},N) \cdot \sum_{ A \subseteq T_{(N)} } \big(\Psi_\mathrm{incr}( \{ B(L) \xleftrightarrow{\; \xi \;} B(2L)^c \} )\big)^{|T_{(N)} \setminus A|} \cdot \Psi^{|A|}_1.   
    \end{aligned}
\end{equation}
We now bound all the above terms. There exists $\mathsf L_{(1)}$ such that for~$L \ge \mathsf L_{(1)}$, we have
\begin{equation}\label{c_upper_bound_no_percolation}
    c(\ell,L,\widehat{\alpha},N) \stackrel{\eqref{c_ell_L_0_alpha_def},\eqref{L_0_ell_def}} = \exp\{5c_\mathrm{nn} \cdot \widehat{\alpha}^{-2} |B(2L)|^2 \cdot 2^N \cdot L^{4d(2-d)} \} \leq \exp\{ 2^N \}.
\end{equation}
Recalling the definition of~$q$ from~\eqref{q_def}, we see that there exists~$\mathsf L_{(2)}$ such that for~$L \ge \mathsf L_{(2)}$
\begin{equation}\label{q_bound_incr}
    q \stackrel{\eqref{L_0_ell_def}}= 2^{d/2} \cdot 3^d \cdot c_d \cdot |B(2L)| \cdot L^{-4d^2} < (p_c - \alpha)/2.
\end{equation}
Then, using the fact that~$\pi_\alpha$ is stochastically increasing in~$\alpha$, we have for~$L \ge \mathsf L_{(2)}$
\begin{equation}\label{pi_incr_bound_1}
    \pi_{\alpha + q - \alpha q } \big( B(L) \xleftrightarrow[]{\; \xi \;} B(2L)^c \big) \stackrel{\eqref{q_bound_incr}}\leq \pi_{ \frac{\alpha + p_c }{2} } \big( B(L) \xleftrightarrow[]{\; \xi \;} B(2L)^c \big) \leq |B(L)| \cdot \pi_{\frac{\alpha + p_c}{2}}\big( 0 \xleftrightarrow[]{\; \xi \;} \partial B(L) \big),
\end{equation}
where~$\partial B(L) := B(L) \setminus B(L-1)$. Since~$\tfrac{\alpha + p_c}{2} < p_c$, there exists~$c = c(\alpha) > 0$ such that
\begin{equation}\label{pi_incr_bound_2}
    \pi_{\frac{\alpha + p_c}{2}}\big( 0 \xleftrightarrow[]{\; \xi \;} \partial B(L) \big) \leq \exp\{-c \cdot L\}.
\end{equation}
This is a consequence of the sharpness of the phase transition for Bernoulli site percolation (see for instance Theorem 1.1 of~\cite{DuminilTassion16}). Putting together~\eqref{L_0_ell_def},~\eqref{pi_incr_bound_1} and~\eqref{pi_incr_bound_2}, we see that there exists~$\mathsf L_{(3)} \ge \mathsf L_{(2)}$ such that for~$L \ge \mathsf L_{(3)}$
\begin{align*}
    &\Psi_\mathrm{incr}( \{ B(L) \xleftrightarrow{\; \xi \;} B(2L)^c \}) \stackrel{\eqref{Psi_incr_def}} \leq |B(L)| \cdot \exp \{ -c \cdot L\} + a \cdot |B(2L)|^2 \exp \Big\{ -\frac{b \cdot L^{4d}}{|B(2L)|} \Big\} \\[.2cm]
    & \hspace{4.78cm} \leq \exp \{ -c/2 \cdot L\},\\[.2cm]
    & \Psi_1 \stackrel{\eqref{Psi_1_def},\eqref{L_0_ell_def}} = |B(2L)| \cdot \Big( 2d \Big( 1 + \frac{d}{4} \Big)^{-L^{16d}/8} + \exp\{-L^{16d}\} \Big) \leq \exp \{ -c/2 \cdot L\}.
\end{align*}
We now bound~\eqref{annulus_ineq_max_Lambda_pc} using~\eqref{ineq_nontrivial_percolation_phase_transition},~\eqref{c_upper_bound_no_percolation} and the above two inequalities to obtain for~$L \ge \max\{\mathsf L_{(1)},\mathsf L_{(3)}\}$,
\begin{align*}
\mu_{\alpha,\mathsf v} \big(  B(L_N-2) & \xleftrightarrow[]{\; \xi \;} B(2L_N)^c \big) \\[.2cm]
& \leq (C_d \cdot L^{32d^2-32d})^{2^N} \cdot \exp \{ 2^N \} \cdot \exp\{ - 2^N c/2 \cdot L \} \\[0.2cm]
& = (C_d  \cdot L^{32d^2-32d} \cdot \exp\{ 1 - c/2 \cdot L \} \big)^{2^N} ,
\end{align*}
which proves~\eqref{eq_part_1_thm_perc} by taking~$L$ large enough.
\end{proof}

\section{Decorrelation inequality for monotone couplings}\label{section_5}
In Section~\ref{ss_proof_proposition}, we introduced a graphical construction based on coalescing-stirring particles for sampling the random configuration~$\xi^{(\alpha)}$ defined in~\eqref{def_xi_alpha}, for~$\alpha \in [0,1]$ and~$\mathsf v \ge 0$, whose distribution is given by~$\mu_{\alpha,\mathsf v}$ in~\eqref{eq_duality_graph_constr}. To sample all the configuration~$\{\xi^{(\alpha)}:\alpha \in [0,1]\}$ in the same space we consider the natural monotone coupling of Bernoulli random configurations~$\{\zeta^{(\alpha)}: \alpha \in [0,1]\}$, that is,~$\zeta^{(\alpha)} \sim \pi_\alpha$ and for~$\alpha \leq \alpha':$
$$\zeta^{(\alpha)}(x) \leq \zeta^{(\alpha')}(x), \; \forall x \in \Z^d. $$
We then use these random configurations in the definition of~$\xi^{(\alpha)}$ in~\eqref{def_xi_alpha} to obtain a monotone coupling of~$\{\xi^{(\alpha)}: \alpha \in [0,1]\}$.

Given~$\alpha, \beta \in [0,1]$, with~$\alpha > \beta$, we denote by~$\mu_{\alpha,\beta,\mathsf v}$ and~$\pi_{\alpha,\beta}$ the distribution of~$(\xi^{(\alpha)},\xi^{(\beta)})$ and~$(\zeta^{(\alpha)},\zeta^{(\beta)})$, respectively. In this section we extend the decorrelation inequalities presented in Lemma~\ref{lemma_decomposition_main} to the measure~$\mu_{\alpha,\beta,\mathsf v}$ and~$\pi_{\alpha,\beta}$. We first introduce some notation. 

Given~$\Lambda \subset \Z^d$ finite, we let~$\mathcal G^{(2)}_\Lambda$ to be the set of subsets of~$\{0,1\}^{\mathbb Z^d} \times \{0,1\}^{\mathbb Z^d}$ of the form
$$ E = \{ (\xi_1,\xi_2) \in \{0,1\}^{\Z^d} \times \{0,1\}^{\Z^d}: (\xi_1|_\Lambda,\xi_2|_\Lambda) \in E_\Lambda\},\; \text{ where } E_\Lambda \subseteq \{0,1\}^\Lambda \times \{0,1\}^\Lambda.$$
Given~$E \in \mathcal G^{(2)}_\Lambda$ and~$x \in \Z^d$, we write~$\theta_x E := \{ (\theta_x \xi_1,\theta_x \xi_2): (\xi_1,\xi_2) \in E \}$. We say that~$E$ is an~\emph{increasing-decreasing} event if~$[ (\xi_1,\xi_2) \in E,\; \xi_1(x) \leq \xi'_1(x),\; \xi'_2(x) \leq \xi_2(x) \; \forall x \in \Lambda ]$ implies~$(\xi'_1,\xi'_2) \in E$, and a~\emph{decreasing-increasing} event if~$[ (\xi_1,\xi_2) \in E,\; \xi_1(x) \geq \xi'_1(x),\; \xi'_2(x) \geq \xi_2(x) \; \forall x \in \Lambda ]$ implies~$(\xi'_1,\xi'_2) \in E$.

Recall the definition of~$c(\ell,L,\alpha,N)$ and~$q(\ell,L)$ from~\eqref{c_ell_L_0_alpha_def} and~\eqref{q_def}.

\begin{lemma}\label{lemma_no_same_E}
There exist dimension-dependent constants~$a,b>0$ such that the following holds. Fix~$N, \ell,L \in \N$ with~$\ell \ge 6$ and~$q(\ell,L) \in (0,1)$ and a proper embedding~$\mathcal T \in \mathcal P_N$. Then, there exist~$\mathsf v_0 = \mathsf v_0(\ell)$ such that, for all~$\mathsf v \ge \mathsf v_0$, all~$\alpha, \beta \in (0,1)$ with~$\alpha > \beta$, and all families~$\{E_m: m \in T_{(N)} \}$, where each~$E_m \in \mathcal{G}^{(2)}_{B(2L)} $ is either increasing-decreasing or decreasing-increasing, we have
$$ \mu_{\alpha,\beta,\mathsf v} \Big(\bigcap_{m \in T_{(N)}} \theta_{\mathcal T(m)}E_m \Big) \leq c(\ell,L,\widehat{\theta},N) \cdot \sum_{A \subseteq T_{(N)}} \left( \prod_{m \in T_{(N)} \setminus A} \Psi(E_m) \right) \cdot (\Psi_1)^{|A|}, $$
where~$\widehat\theta := \min\{ 1-\alpha, \alpha - \beta, \beta \}$,~$\Psi_1$ was defined in~\eqref{Psi_1_def} and
$$ \Psi(E_m) = \left\{ \begin{array}{cl}
    \Psi_\mathrm{incr-decr}(E_m), & \text{if } E_m \text{ is an increasing-decreasing event},  \\[2mm]
    \Psi_\mathrm{decr-incr}(E_m), & \text{if } E_m \text{ is a decreasing-increasing event}, 
\end{array} \right. $$
with
\begin{equation}\label{def_Psi_incrdecr} 
\begin{aligned}
& \Psi_\mathrm{incr-decr}(E_m) = \pi_{\alpha +q -\alpha q,\; \beta(1-q)}(E_m) + a \cdot |B(2L)|^2 \exp \Big\{ - \frac{b \cdot \ell^{1/4}}{|B(2L)|} \Big\}. \\[2mm]
& \Psi_\mathrm{decr-incr}(E_m) = \pi_{\alpha(1-q),\; \beta + q -\beta q}(E_m) + a \cdot |B(2L)|^2 \exp \Big\{ - \frac{b \cdot \ell^{1/4}}{|B(2L)|} \Big\}.     
\end{aligned}
\end{equation}
\end{lemma}

To prove this lemma, we proceed analogously to the case of~$\mu_\alpha$. We start with an analogous result for Lemma~\ref{lemma_mu_stir_pi_alpha}.

\begin{lemma}\label{lemma_mu_pi_ineq_Prod}
Let~$\Lambda \subset \Z^d$ be finite and let~$E,F \in \mathcal G^{(2)}_\Lambda$ be an increasing-decreasing event and a decreasing-increasing event, respectively. In the space of the graphical construction, we consider an~$\mathcal F_t$-measurable random configuration~$\eta \in \{0,1\}^\Lambda$ with the property that the coalescing random walks started at~$\{x \in \Lambda: \eta(x) = 0\}$ do not coalesce until time~$t$. Then for any~$\alpha,\beta \in (0,1)$ with~$\alpha > \beta$ and~$\mathsf v \ge 0$,
\begin{align*}
& \mu_{\alpha,\beta,\mathsf v}(E) \leq \E \Big[\pi_{\alpha,\beta}\big( (\zeta_1,\zeta_2): (\zeta_1\vee \eta , \zeta_2 \wedge (1-\eta) ) \in E \big)  \cdot \hspace{-2mm} \prod_{ \substack{ x,y \in \Lambda \setminus \Xi \\ x< y} } \big( 1+ h(Y^x_t,Y^y_t) \cdot (\widehat\theta^{-2}-1) \big) \Big], \\[2mm] 
& \mu_{\alpha,\beta,\mathsf v}(F) \leq \E \Big[\pi_{\alpha,\beta}\big( (\zeta_1,\zeta_2): (\zeta_1 \wedge (1-\eta), \zeta_2 \vee \eta ) \in E \big)  \cdot \hspace{-2mm} \prod_{ \substack{ x,y \in \Lambda \setminus \Xi \\ x< y} } \big( 1+ h(Y^x_t,Y^y_t) \cdot (\widehat\theta^{-2}-1) \big) \Big],
\end{align*}
where~$\widehat\theta := \min\{ 1-\alpha, \alpha - \beta, \beta \}$.
Moreover, assume that~$\Lambda' \subset \Z^d$ is a finite set disjoint from~$\Lambda$, and let~$F \in \mathcal G^{(2)}_{\Lambda'}$ be a decreasing-increasing event. Then,
\begin{equation}\label{lemma_desintegration_main_mon}
\begin{aligned}
\mu_{\alpha,\beta,\mathsf v} \left( E \cap F \right) \leq  \E\Big[ \pi_{\alpha,\beta} \big( (\zeta_1,\zeta_2) &: (\zeta_1\vee \eta , \zeta_2 \wedge (1-\eta) ) \in E, (\zeta_1 \wedge (1-\eta), \zeta_2 \vee \eta ) \in F \big) \\[2mm]
& \cdot \prod_{ \substack{ x,y \in (\Lambda \cup \Lambda') \setminus \Xi \\ x< y} } \left( 1+ h(Y^x_t,Y^y_t) \cdot (\widehat\theta^{-2}-1) \right) \Big].
\end{aligned}
\end{equation}
\end{lemma}
\begin{proof}
Let~$A,B,C \subseteq \Lambda$ be three sets forming a partition of~$\Lambda$. By the definition of~$\mu_{\alpha,\beta,\mathsf v}$ using a monotone coupling, we have
\begin{align*}
    &\mu_{\alpha,\beta,\mathsf v}\big((\xi_1,\xi_2):\; \xi_1 \equiv \xi_2 \equiv 1 \text{ on } A; \;\xi_1 \equiv \xi_2 \equiv 0 \text{ on } B;\; \xi_1 \equiv 1-\xi_2 \equiv 1 \text{ on }C\big)\\[.2cm]
    &= \E \big[ \beta^{|S_\infty(A)|}  \cdot (1-\alpha)^{|S_\infty(B)|}\cdot (\alpha - \beta)^{|S_\infty(C)|} \cdot \mathds{1}_{\mathcal D} \big],
\end{align*}
where~$\mathcal D$ is the event that no two random walkers started from distinct sets among~$A,B,C$ coalesce. The right-hand side above is smaller than
\[
\beta^{|A|} \cdot (\alpha-\beta)^{|C|} \cdot (1-\alpha)^{|B|} \cdot \E \big[ (\widehat\theta^{-1})^{|\Lambda| - |S_\infty(\Lambda)|} \big].
\]
Summing over all partitions~$(A,B,C)$ of~$\Lambda$ gives
\begin{equation}\label{eq_coupling_measure_decorr_mu_pi}
    \mu_{\alpha,\beta,\mathsf v}(E) \leq \pi_{\alpha,\beta}(E) \cdot \prod_{ \substack{ x,y \in \Lambda \\ x< y} } \big( 1+ h(x,y) \cdot (\widehat\theta^{-2}-1) \big) \quad \text{for any } E \in \mathcal G^{(2)}_\Lambda.
\end{equation}
The rest of the proof is the same as in the proof of Lemma~\ref{lemma_mu_stir_pi_alpha}, so we omit it.
\end{proof}

We now establish the main result of this section. 
\begin{proof}[Proof of Lemma~\ref{lemma_no_same_E}]
For each~$m \in T_{(N)}$, let us fix a decreasing-increasing or increasing-decreasing event~$E_m$ in~$\mathcal G^{(2)}_{B(2L)}$. Recall the definitions of~$\xi_1$ and~$\xi_2$ from~\eqref{xi_1_x_2_def}, and let~$\Lambda$ and~$\Xi$ be given by~\eqref{Lambda_def} and~\eqref{Xi_def_xi12}, respectively. Applying Lemma~\ref{lemma_product_deterministic_limit} and~\eqref{lemma_desintegration_main_mon} from Lemma~\ref{lemma_mu_pi_ineq_Prod} with~$\eta = \xi_1 \vee \xi_2$, we have
\begin{equation}\label{ineq_decomposition_mu_alphabeta_1}
\begin{aligned}
\mu_{\alpha,\beta,\mathsf v} \Big( & \bigcap_{m \in T_{(N)}}  \theta_{\mathcal T(m)} E_m \Big) \leq c(\ell,L,\widehat{\theta},N) \\[2mm]
& \cdot \E\Big[ \pi_{\alpha,\beta} \big( (\zeta_1,\zeta_2): (\zeta_1 \vee \xi_1 \vee \xi_2, \zeta_2\wedge (1-\xi_1) \wedge (1-\xi_2) \big) \in \bigcap_{ \substack{m \in T_{(N)} \\ E_m \text{ incr-decr}} } \theta_{\mathcal T(m)}E_m,  \\[2mm]
& \qquad \qquad (\zeta_1 \wedge (1-\xi_1) \wedge (1-\xi_2), \zeta_2 \vee \xi_1 \vee \xi_2) \in \bigcap_{ \substack{m \in T_{(N)} \\ E_m \text{ decr-incr}} } \theta_{\mathcal T(m)}E_m \big) \Big].         
\end{aligned}
\end{equation}

The events~$E_m$ in~$\mathcal G^{(2)}_{B(2L)}, m \in T_{(N)},$ depend on configurations in the ball~$B(2L)$, so we can still use the decomposition argument at the beginning of Section~\ref{ss_decomposition} to obtain that the expected value of the right-hand side of~\eqref{ineq_decomposition_mu_alphabeta_1} is smaller than
\begin{equation}\label{eq_decomposition_mu_alphabeta_2}
\begin{aligned}
& \sum_{A \subseteq T_{(N)}} \left( \prod_{ \substack{m \in T_{(N)} \setminus A \\ E_m \text{ incr-decr} } } \E \big[ \pi_{\alpha,\beta}\big( (\zeta_1,\zeta_2): (\zeta_1 \vee \eta_2, \zeta_2 \wedge (1-\eta_2) ) \in E_m \big) \cdot \mathds{1}\{\eta_1 \equiv 0 \} \big] \right.  \\[2mm]
& \cdot \left. \prod_{ \substack{m \in T_{(N)} \setminus A \\ E_m \text{ decr-incr} }  } \E \big[ \pi_{\alpha,\beta}\big( (\zeta_1,\zeta_2): (\zeta_1 \wedge (1-\eta_2), \zeta_2 \vee \eta_2 ) \in E_m \big) \cdot \mathds{1}\{\eta_1 \equiv 0 \} \big] \right) \cdot \big( \Prob(\eta_1 \equiv 1) \big)^{|A|}.
\end{aligned}
\end{equation}
Recall the coupling in~$\widehat{\Prob}$ of a system of stirring particles and a system of independent random walks of Proposition~\ref{prop_couple_irw_ep}. Following the same arguments and the monotonicity of the event~$E_m$, we obtain that the expected value in~\eqref{eq_decomposition_mu_alphabeta_2} is smaller than (cf. \eqref{coupling_pure_stir} and \eqref{eq_binomial_to_ind_rw_incr})
\begin{equation}\label{ineq_alpha_beta_final_1}
\begin{aligned}
& \widehat{\E}\big[ \pi_{\alpha,\beta}\big( (\zeta_1,\zeta_2): (\zeta_1 \vee \eta^\mathrm{rw}_2, \zeta_2\wedge (1-\eta^\mathrm{rw}_2) ) \in E_m \big) \big] + a \cdot |B(2L)|^2 \cdot \exp\{-b \cdot \ell^{1/4}/|B(2L)|\}, \\[2mm]
& \widehat{\E}\big[ \pi_{\alpha,\beta}\big( (\zeta_1,\zeta_2): (\zeta_1 \wedge (1-\eta^\mathrm{rw}_2), \zeta_2\vee \eta^\mathrm{rw}_2 ) \in E_m \big) \big] + a \cdot |B(2L)|^2 \cdot \exp\{-b \cdot \ell^{1/4}/|B(2L)|\},
\end{aligned}
\end{equation}
when~$E_m$ is increasing-decreasing and decreasing-increasing, respectively.

We proved in Lemma~\ref{lemma_stochastic_domination} that~$\eta^\mathrm{rw}_2$ is stochastically dominated by a product Bernoulli distribution on~$\{0,1\}^{B(2L)}$ with density parameter~$q$ defined in~\eqref{q_def}. Using~\eqref{ber_p_q} and the monotonicity of the event~$E_m$, we have that
\begin{equation}\label{ineq_alpha_beta_final_2}
\begin{aligned}
& \widehat{\E}\big[ \pi_{\alpha,\beta}\big( (\zeta_1,\zeta_2): (\zeta_1 \vee \eta^\mathrm{rw}_2, \zeta_2\wedge (1-\eta^\mathrm{rw}_2)  ) \in E_m \big) \big] \leq \pi_{\alpha+q-\alpha q,\;\beta(1-q) }(E_m), \\[2mm]
& \widehat{\E}\big[ \pi_{\alpha,\beta}\big( (\zeta_1,\zeta_2): (\zeta_1 \wedge (1-\eta^\mathrm{rw}_2), \zeta_2 \vee \eta^\mathrm{rw}_2 ) \in E_m \big) \big] \leq \pi_{\alpha(1-q),\;\beta+q-\beta q }(E_m),    
\end{aligned}
\end{equation}
when~$E_m$ is increasing-decreasing and decreasing-increasing, respectively.

Finally, the term~$\Psi_1$ comes from Lemma~\ref{lemma_Psi_1}, the constants~$a,b>0$ and the terms~$\Psi_\mathrm{incr-decr}(E_m)$ and~$\Psi_\mathrm{decr-incr}(E_m)$ come from~\eqref{ineq_alpha_beta_final_1} and~\eqref{ineq_alpha_beta_final_2}. We conclude by plugging them back in~\eqref{eq_decomposition_mu_alphabeta_2} and then in~\eqref{ineq_decomposition_mu_alphabeta_1}.
\end{proof}

\section{Proof of Theorem~\ref{thm_main_2}: Percolation for $\alpha$ above~$p_c$ and~$\mathsf v$ large}\label{section_6}
In this section we prove
$$ \limsup_{\mathsf v \to \infty} \alpha_c(\mathsf v) \leq p_c. $$
We start by doing a coarse graining of the lattice. To do so, we introduce some notation. Fixing~$x \in \Z^d$ and~$\xi \in \{0,1\}^{\Z^d}$ with~$\xi(x)=1$, the \emph{cluster} of~$x$ is $\{y\in \Z^d: x\xleftrightarrow[]{\xi}y\}$. For a set~$A \subseteq \Z^d$, the \emph{diameter} of~$A$ is defined as
$$ \mathrm{diam}(A) := \sup\{ |x-y|_1 : x,y \in A \}. $$

Given~$M \in \N$, we define the sets~$E_M^{(1)} \subset \{0,1\}^{\Z^d}$, $E_M^{(2)} \subset (\{0,1\}^{\Z^d})^2$ and~$E_M \subset \{0,1\}^{\Z^d}$ as follows:
\begin{align*}
& E^{(1)}_M := \left\{ \begin{array}{c}
    \xi \in \{0,1\}^{\Z^d}: \text{ there exists an  open cluster}  \\
    \text{in  $\xi \vert_{B(M)}$ that intersects all the faces of } B(M) 
\end{array} \right\}, \\[2mm] 
& E^{(2)}_M := \left\{ \begin{array}{c}
    (\xi_1,\xi_2) \in (\{0,1\}^{\Z^d})^2 : \xi_1 \vert_{B(M)} \ge \xi_2 \vert_{B(M)} \text{ and  all open clusters in }\xi_2 \vert_{B( M )} \text{ of } \\
    \text{diameter  greater than  or equal to } M \text{ are connected in } \xi_1 \vert_{B(M)} 
\end{array} \right\}, \\[2mm]
& E_M := \left\{ \begin{array}{c}
    \xi \in \{0,1\}^{\Z^d}: \xi \vert_{B( M )} \text{ has a unique open cluster of diameter greater than }\\ \text{or equal to } M, \text{ and this cluster intersects all the faces of } B(M) 
\end{array} \right\}.
\end{align*}

\begin{remark}\label{rmk_inclusion}
Note that if~$\xi \in E_M$, then~$\xi \in E^{(1)}_M$ and~$(\xi',\xi) \in E^{(2)}_M$ for any~$\xi' \in \{0,1\}^{\Z^d}$ such that~$\xi' \vert_{B(M)} \ge \xi \vert_{B(M)}$. In particular, $\big(E_M^{(1)}\big)^c\subset E_M^c$ and $$\big(E_M^{(2)}\big)^c\cap \big( \{0,1\}^{\Z^d} \times E_M \big) \subset\{(\xi',\xi) \in (\{0,1\}^{\Z^d})^2 : \xi' \vert_{B(M)} \ngeq \xi \vert_{B(M)} \}.$$
\end{remark}

The following result, which is Corollary 7.4 in~\cite{Cerf2000}, establishes the exponential decay of the probability of the set~$E_M$ under the Bernoulli product measure. Although originally stated for dimension 3, it applies equally well to any dimension higher than 3.

\begin{theorem}\label{thm_supercritical}
    Let~$p > p_c$. Then there exists~$\mathsf{a} = \mathsf{a}(p,d)>0$ and~$\mathsf{b} = \mathsf{b}(p,d)>0$ such that
    \begin{equation}\label{eq_pisztora}
        \pi_p( E^c_M ) \leq \mathsf a \cdot \exp\{- \mathsf b \cdot M\} \quad \text{for every } M \in \N.
    \end{equation}
\end{theorem}
\begin{remark}
     This result is a specific application of Theorem 3.1 from~\cite{Pisztora96}.
     This theorem was originally established for random cluster measures, but the reference indicates that it can also be adapted for site percolation.
\end{remark}

\begin{proof}[Proof of the upper bound:]
Fix an arbitrary~$\alpha > p_c$. Given~$(\xi_1,\xi_2) \in \{0,1\}^{\Z^d} \times \{0,1\}^{\Z^d}$, define
$$\tilde \xi_M (x) := \mathds{1}\{ \xi_2 \in \theta_{Mx} E^{(1)}_M, (\xi_1,\xi_2) \in \theta_{Mx} E^{(2)}_M \}, \quad x \in \Z^d.$$
It follows from the definition of the sets~$E^{(1)}_M$ and~$E^{(2)}_M$ that if there exists an infinite connected path~$\tilde \gamma$ of open sites in~$\tilde \xi_M$, then there exists an infinite connected path~$\gamma$ of open sites in~$\xi_1$. This implies
\begin{equation}\label{ineq_tilde_mu_stir}
    \mu_{\alpha,\mathsf v}(\mathrm{Perc}) \ge \tilde \mu_{\alpha,\beta,M,\mathsf v}(\mathrm{Perc}), \text{ for any } \alpha > \beta,
\end{equation}
where~$\tilde \mu_{\alpha,\beta,M,\mathsf v}$ is the distribution of~$\tilde \xi_M$ when~$(\xi_1,\xi_2)$ is sampled from~$\mu_{\alpha,\beta,\mathsf v}$ defined in Section~\ref{section_5}.

We want to prove that there is a choice of~$L$ and~$\ell$ in~\eqref{scales_mathcal_L0},~$M \in \N$ and~$\alpha > \beta$ such that
\begin{equation}\label{eq_implies_part_2_theorem}
\begin{aligned}
    &\text{there exist } \mathsf v_0 > 0 \text{ such that, for any } \mathsf v \ge \mathsf v_0,
    \\
    &\hspace{5cm} \tilde \mu_{\alpha,\beta,M, \mathsf v} \big( B(L_N - 2) \xleftrightarrow{\; \ast (1- \tilde \xi) \;} B(2L_N)^c \big) \leq 2^{-2^N}, \quad N \in \N.
\end{aligned}    
\end{equation}
As observed in~\eqref{eq_part_2_imply_noperc}, this implies that if~$\mathsf v \ge \mathsf v_0$ we have~$\tilde \mu_{\alpha,\beta,M,\mathsf v}(\mathrm{Perc}) = 1$. Consequently, by~\eqref{ineq_tilde_mu_stir}, we also have~$\mu_{\alpha,\mathsf v}(\mathrm{Perc}) = 1$.

We now begin the proof of~\eqref{eq_implies_part_2_theorem} by setting~$L=1$. With this choice we have that:
$$\{ B(\mathcal T(m), L-2)\xleftrightarrow[ ]{\ast(1-\tilde \xi)}B(\mathcal T(m), 2L)^c\}=\{\tilde \xi(\mathcal T(m))=0\}.$$

For any~$N \in \mathbb{N}$, we apply~\eqref{path_anulus} followed by a union bound and~\eqref{bound_for_Lambda_N} to obtain 
\begin{equation}\label{eq_tildemu}
    \begin{aligned}
        & \tilde \mu_{\alpha,\beta,M,\mathsf v}  \big( B(L_N-2)  \xleftrightarrow[ ]{\ast(1-\tilde \xi)} B(2L_N) \big) \\[0.2cm]
        &  \leq (C_d \cdot \ell^{2d-2})^{2^N} \cdot \max_{\mathcal T \in \mathcal P_N} \tilde \mu_{\alpha,\beta,M,\mathsf v} \Big( \bigcap_{m \in T_{(N)}} \{ \tilde \xi( \mathcal T(m) ) = 0 \} \Big)\\[0.2cm]
        & = (C_d \cdot \ell^{2d-2})^{2^N} \cdot \max_{\mathcal T \in \mathcal P_N} \mu_{\alpha,\beta, \mathsf v} \Big(  \bigcap_{m \in T_{(N)}} \big\{ \xi_2 \notin \theta_{ M \cdot \mathcal T(m) } E^{(1)}_M \big\} \cup \big\{ (\xi_1,\xi_2) \notin \theta_{ M \cdot \mathcal T(m) }E^{(2)}_M \big\} \Big) 
    \end{aligned}
\end{equation}
where the equality follows from the definition of~$\tilde \xi_M$. By monotonicity with respect to set inclusion, the measure on the right-hand side of~\eqref{eq_tildemu} is bounded above by
\begin{equation}\label{eq_tildemu_2}
\sum_{B \subseteq T_{(N)}} \mu_{\alpha,\beta, \mathsf v} \Big( \bigcap_{m \in B} \theta_{M \cdot \mathcal T(m)} \big( \{0,1\}^{B(M)} \times \big(E^{(1)}_M\big)^c \big) \cap  \bigcap_{m \in T_{(N)}\setminus B} \theta_{M \cdot \mathcal T(m)} \big( E^{(2)}_M\big)^c \Big).
\end{equation}

Fix~$N \in \N$, and~$\mathcal T \in \mathcal P_N$, a proper embedding with spatial boxes of scale~$L=1$ and~$\ell$, then the map~$\mathcal T_M: T_N \to \Z^d$ defined by~$\mathcal T_M(m) = M \cdot \mathcal T(m)$ is a proper embedding with spatial boxes of scale~$L=M$ and~$\ell$. Note that~$\{0,1\}^{B(M)} \times (E^{(1)}_M)^c \in \mathcal G^{(2)}_{B(M)}$ is an increasing-decreasing event and~$E^{(2)}_M \in \mathcal G^{(2)}_{B(M)}$ is a decreasing-increasing event, so we can fix~$B \subseteq T_{(N)}$ and apply Lemma~\ref{lemma_no_same_E} to obtain that the measure in~\eqref{eq_tildemu_2} is bounded above by
\begin{equation}\label{eq_translation_E_M_2}
c(\ell,M/2,\widehat{\theta},N) \cdot \sum_{A \subseteq T_{(N)}} \left( \prod_{ \substack{m \in T_{(N)}   \setminus A \\ m \in B }}\Psi \big( \{0,1\}^{B(L)} \times \big(E^{(1)}_M\big)^c \big) \cdot \prod_{ \substack{m \in T_{(N)}  \setminus A \\ m \in T_{(N)} \setminus B }}\Psi \big(E^{(2)}_M \big)^c \right) \cdot (\Psi_1)^{|A|}.
\end{equation}

Now, setting
\begin{equation}\label{ell_def_M}
    \ell = M^{16d} \quad \text{ and } \quad \mathsf v_0 = \ell \exp(\ell) - 1,
\end{equation} 
we show that $M$ can be taken large enough so that~\eqref{eq_implies_part_2_theorem} holds. We begin by bounding~\eqref{eq_translation_E_M_2} from above. We choose~$M_{(1)}$ large enough such that for~$M \ge M_{(1)}$,
\begin{equation}\label{c_upper_bound_percolation}
    c(\ell,M/2,\widehat{\theta},N) \stackrel{\eqref{c_ell_L_0_alpha_def},\eqref{ell_def_M}} =  \exp\{5c_\mathrm{nn} \cdot \widehat{\theta}^{-2} \cdot |B(M)|^2 \cdot 2^N \cdot M^{4d(2-d)} \} \leq \exp\{ 2^N \}.
\end{equation}

Recall that we have fixed $\alpha>p_c$ and the definition of~$q$ from~\eqref{q_def}. We set~$\beta = (3\alpha + p_c)/4$. There exists~$M_{(2)}$ such that for~$M \ge M_{(2)}$,
$$ q =2^{d/2} \cdot 3^d \cdot c_d \cdot |B(M)| \cdot M^{-4d^2} < \frac{\alpha - p_c}{16}, $$
which implies that
$$\alpha(1-q) > \frac{7\alpha + p_c}{8} > \beta +q -\beta q >\beta(1-q) > \frac{\alpha + p_c}{2}>p_c,$$
where the first, second, and fourth inequality follow from the constraints of~$q$ and the value of~$\beta$. From the monotonicity of the events and Remark~\ref{rmk_inclusion} (the condition~$\alpha(1-q) \ge \beta + q -\beta q$ is required for the remark to apply) we have, 
\begin{equation}\label{max_pi_1}
\begin{aligned}
\pi_{\alpha+q-\alpha q, \; \beta(1-q)} &\big( \{0,1\}^{\Z^d} \times (E^{(1)}_M )^c \big) \leq  \pi_{\frac{\alpha+p_c}{2}}\big( \big( E^{(1)}_M \big)^c \big) \leq \pi_{\frac{\alpha+p_c}{2}}(E_M^c ), \\[2mm]
\pi_{\alpha(1-q), \; \beta+q-\beta q} &\big((E^{(2)}_M)^c  \big) \leq \pi_{\alpha(1-q), \; \frac{7\alpha + p_c}{8}} \big( \big(E^{(2)}_M \big)^c \big) \\[2mm]
&\leq \pi_{\alpha(1-q), \; \frac{7\alpha + p_c}{8}} \big(\{0,1\}^{\Z^d} \times E^c_M \big)  + \pi_{\alpha(1-q), \; \frac{7\alpha + p_c}{8}} \big((E^{(2)}_M)^c\cap \big( \{0,1\}^{\Z^d} \times E_M \big) \big) \\[2mm]
& = \pi_{\frac{7\alpha + p_c}{8}}( E_M^c),
\end{aligned}
\end{equation}
where in the last equality we used the fact that the measure $\pi_{\alpha(1-q), \; \frac{7\alpha + p_c}{8}}$ puts zero mass on configurations $(\xi', \xi)$ with $\xi'\ngeq \xi$. From Theorem~\ref{thm_supercritical}, there exists~$\mathsf a = \mathsf a(\alpha,d)$ and~$\mathsf b = \mathsf b(\alpha,d)$ such that for each~$M \ge M_{(2)}$,
\begin{equation}\label{max_pi_2}
\max \left\{ \pi_{\frac{\alpha+p_c}{2}}(E_M^c ), \;  \pi_{\frac{7\alpha+p_c}{8}}(E_M^c ) \right\} \stackrel{\eqref{eq_pisztora}} \leq \mathsf a \cdot \exp \{ - \mathsf b \cdot M \}        
\end{equation}
Hence, there exists~$M_{(3)} \ge M_{(2)}$ such that for~$M \ge M_{(3)}$,
\begin{align*}
    & \max \big\{ \Psi\big(  \{0,1\}^{\Z^d} \times \big( E^{(1)}_M\big)^c \big),\Psi\big( \big(E^{(2)}_M \big)^c \big) \big\}  \\[2mm] 
    &\hspace{10mm} \stackrel{\eqref{def_Psi_incrdecr},\eqref{ell_def_M},\eqref{max_pi_1},\eqref{max_pi_2}} \leq \mathsf a \cdot \exp \{ - \mathsf b \cdot M \} + a \cdot |B(M)|^2 \exp \Big\{ -\frac{b \cdot M^{4d}}{|B(M)|} \Big\} \leq 2\mathsf a \cdot \exp \{ - \mathsf b \cdot M \} ,\\[0.2cm]
    & \Psi_1 \stackrel{\eqref{Psi_1_def},\eqref{ell_def_M}} = |B(M)| \cdot \Big( 2d \Big( 1 + \frac{d}{4} \Big)^{-M^{16d}/8} + \exp\{-M^{16d}\} \Big) \leq \mathsf a \cdot \exp \{ - \mathsf b \cdot M \}.
\end{align*}

We now upper bound~\eqref{eq_translation_E_M_2} using~\eqref{c_upper_bound_percolation} and the two inequalities above. This yields that, for~$M \ge \max\{M_{(1)},M_{(3)}\}$, the term in~\eqref{eq_translation_E_M_2} is smaller than 
$$ \exp\{2^N\} \cdot \sum_{A \subseteq T_{(N)}} (2\mathsf a \exp\{- \mathsf b M\})^{|T_{(N)} \setminus A|} \cdot (\mathsf a \exp\{- \mathsf b M\})^{|A|} \leq (4\mathsf a \exp\{1- \mathsf b M\} )^{2^N}. $$
Then, since this bound holds for each~$B \subseteq T_{(N)}$, the term in~$\eqref{eq_tildemu_2}$ is bounded above by~$(8\mathsf a \exp\{1- \mathsf b M\} )^{2^N}$. Plugging this into~\eqref{eq_tildemu}, we conclude that
\begin{align*}
    \tilde \mu_{\alpha,M,\mathsf v}  \big( B(L_N-2) & \xleftrightarrow[]{\; \ast (1-\tilde \xi \;)} B(2L_N)^c \big) \\[.2cm]
    & \leq (C_d \cdot M^{32d^2-32d})^{2^N} \cdot(8\mathsf a \exp\{1- \mathsf b M\} )^{2^N} \\[0.2cm]
    & = \big( 8 \mathsf a C_d \cdot M^{32d^2-32d} \cdot  \exp\big\{ 1 - \mathsf b \cdot M \big\} \big)^{2^N},
\end{align*}
which proves~\eqref{eq_implies_part_2_theorem} by taking~$M$ large enough.
\end{proof}

\section{Appendix - Proof of Proposition~\ref{prop_couple_irw_ep}}\label{Section_Appendix}
We define an \textit{instruction manual} for a random walk on $\Z^d$ as a pair~$(\mathcal{T},\mathsf m)$, where $\mathcal{T}$ is a rate-1 Poisson point process on~$[0,\infty)$ and $\mathsf m: \mathcal T \to \{\pm e_i: i=1,\dots,d\}$ is a random function where~$e_i$ is the $i$th standard basis vector of~$\R^d$ and, conditionally on~$\mathcal T$, the values~$\{\mathsf m (t): t \in \mathcal T\}$ are independent and uniformly distributed on~$\{\pm e_1,\dots,\pm e_d\}$. To define a random walk with jump rate~$1$ started from $x \in \Z^d$ with this instruction manual we enumerate~$\mathcal T = (t_1,t_2,\dots)$ in increasing order and set~$X_t := x$ for~$ t \in [0,t_1)$. Assuming that $X_{t^-_n}$ has been defined for some $n \in \N$ we set~$X_t := X_{t^-_n} + \mathsf m (t_n)$ for~$ t \in [t_n, t_{n+1})$. 

Given $k \in \N$ for a system of $k$ independent random walks~$(X^1_t,\dots, X^k_t)_{t \ge 0}$ of jump rate~1 started from~$(x_1,\dots,x_k) \in (\Z^d)^k$ on the probability space~$\widehat{\Prob}$ we consider the independent instruction manuals~$(\mathcal T_1, \mathsf m_1), \dots, (\mathcal T_k, \mathsf m_k)$. 

We now construct a system of~$k$ stirring particles~$(W^1_t,\dots,W^k_t)_{t \ge 0}$ started from~$(x_1,\dots,x_k)$ using the same instruction manuals. Given~$(z_1,\dots,z_k) \in (\Z^d)^k$, for~$i,j \in \{ 1,\dots,k \}$ with~$i<j$ we write
$$ (z_1,\dots,z_k)^\mathrm{stir}_{i,j} := (z_1,\dots,z_{i-1},z_j,z_{i+1},\dots,z_{j-1},z_{i},z_{j+1},\dots,z_k).$$

Enumerate~$\cup^k_{i=1} \mathcal T_i = (t_1,t_2,\dots)$. We set~$(W^1_t,\dots,W^k_t) := (x_1,\dots,x_k)$ for~$t \in [0,t_1)$. Assuming that~$(w_1,\dots,w_k) = (W^1_{t^-_n}, \dots, W^k_{t^-_n})$ has been defined for some~$n \in \N$ , we set for~$t \in [t_n,t_{n+1})$
\begin{equation}
    (W^1_t, \dots, W^k_t) := \left\{ \begin{array}{ll}
        (w_1,\dots,w_i + \mathsf{m}_i(t_n),\dots, w_k)   & \text{if } t_n \in \mathcal T_i \text{ and }  \\[0.1cm]
        & w_i + \mathsf{m}_i(t_n) \notin \{w_1,\dots, w_k\},  \\[0.2cm]
        (w_1,\dots,w_k)^\mathrm{stir}_{i,j} & \text{if } t_n \in \mathcal{T}_i \text{ and } w_i + \mathsf{m}_i(t_n) = w_j, \; i<j,\\[0.2cm]
        (w_1,\dots,w_k)   & \text{otherwise.}
    \end{array} \right.
\end{equation}
To describe the discrepancies of the particles $(W^i_t)_{t \ge 0}$ and $(X^i_t)_{t \ge 0}$ for each $i$ we introduce the process~$\left(D_t(i,j)\right)_{t \ge 0}$ for each $j < i$ defined as follows. We set $D_t(i,j) := 0$ for~$t \in [0,t_1)$. Assume that~$D_{t^-_n}$ has been defined for some $n \in \N$. 
\begin{itemize}
    \item If~$|W^i_{t^-_n}-W^j_{t^-_n}|_1 \neq 1$, then set~$D_t(i,j) := D_{t_n^{-}}(i,j)$ for~$t \in [t_n,t_{n+1})$.
    \item If~$|W^i_{t^-_n}-W^j_{t^-_n}|_1=1$, then set for~$t \in [t_n,t_{n+1})$
$$ D_t(i,j) := D_{t^-_n}(i,j) + \big[ W^i_{t_n} - W^i_{t^-_n} \big] - \big[ X^i_{t_n} - X^i_{t^-_n} \big].$$
\end{itemize}
Hence,
\begin{equation}\label{eq_D_t_D_t_n}
    D_{t_n}(i,j) - D_{t^-_n}(i,j) = \left\{ \begin{array}{ll}
        -\mathsf{m}_i(t_n) & \text{ if } t_n \in \mathcal T_i \text{ and } W^i_{t^-_n} + \mathsf{m}_i(t_n) = W^j_{t^-_n},  \\[0.2cm]
        - \mathsf{m}_j(t_n) & \text{ if } t_n \in \mathcal T_j \text{ and } W^j_{t^-_n} + \mathsf{m}_j(t_n) = W^i_{t^-_n},\\[0.2cm]
        0 & \text{ otherwise.}
    \end{array} \right.
\end{equation}    
From the construction, we can check that
$$W^i_t - X^i_t = \sum_{j<i} D_t(i,j) \text{ for all } i=1,\dots, k \text{ and } t \ge 0.$$
Then, for any~$t \ge 0$
\begin{equation}\label{eq_i_X_T_W_T}
    \widehat{\Prob}( \exists \; i \in \{1,\dots,k\} : |W^i_t - X^i_t| > r ) \leq  \sum_{1 \leq j < i \leq k} \widehat{\Prob}( |D_t(i,j)| > r/k).
\end{equation}
To obtain an upper bound on the probability on the right-hand side of this inequality we use the following martingale result that appears as Theorem~2.4 of~\cite{RathValesin2017}, which is an adaptation of Theorem~26.17 of~\cite{Kallenberg2002}.
\begin{theorem}\label{thm_kallemberg}
Let $S \in [0,\infty]$. Let~$(N_t)$ be a square-integrable càdlàg martingale with \textit{predictable quadratic variation} $\langle N \rangle_S \leq \sigma^2$ almost surely for some for some~$\sigma^2 \in (0,\infty)$.  Assume that the jumps of $N$ are almost surely bounded by $\Delta \in (0,\sigma]$. Then we have
$$ \Prob \big[ \max_{0\leq t \leq S}N_t - N_0 > r \big] \leq \exp \Big\{ -\frac{1}{2} \frac{r}{\Delta}  \ln \Big(1+ \frac{r \Delta}{\sigma^2} \Big) \Big\}, \quad r \ge 0. $$    
\end{theorem}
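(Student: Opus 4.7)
The plan is to prove this via the classical exponential supermartingale argument, obtaining a sharp Bennett-type bound and then simplifying it to the stated logarithmic form through an elementary inequality.

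For each $\lambda > 0$ I would introduce the function
\[
\phi(\lambda) := \frac{e^{\lambda \Delta} - 1 - \lambda \Delta}{\Delta^2}
\]
and the process
\[
Z^\lambda_t := \exp\bigl(\lambda(N_t - N_0) - \phi(\lambda)\, \langle N \rangle_t\bigr).
\]
The analytic input is the pointwise inequality $e^{\lambda x} \le 1 + \lambda x + \phi(\lambda)\, x^2$ for $x \le \Delta$, which follows from the fact that $x \mapsto (e^{\lambda x} - 1 - \lambda x)/x^2$ is nondecreasing on $\R$. Applying It\^o's formula to $\exp(\lambda(N_t - N_0))$ and using this bound with $x$ equal to each jump $\Delta N_s$ (which is a.s. at most $\Delta$ in absolute value), the drift of the resulting exponential semimartingale is dominated by $\phi(\lambda)\, Z^\lambda_{t-}\, d\langle N \rangle_t$. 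Together with the standard compensator estimate for the continuous part of the quadratic variation, this shows that $(Z^\lambda_t)$ is a nonnegative c\`adl\`ag supermartingale with $Z^\lambda_0 = 1$.

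Next I would use the hypothesis $\langle N \rangle_S \le \sigma^2$ a.s.\ to observe that on the event $\{\max_{0 \le t \le S}(N_t - N_0) > r\}$ one has $\max_{0 \le t \le S} Z^\lambda_t > \exp(\lambda r - \phi(\lambda) \sigma^2)$. Applying Doob's maximal inequality to the nonnegative supermartingale $Z^\lambda$ (with an appropriate localization and monotone passage to the limit when $S = \infty$), we get
\[
\Prob\bigl[\max_{0 \le t \le S}(N_t - N_0) > r\bigr] \le \exp\bigl(-\lambda r + \phi(\lambda) \sigma^2\bigr).
\]
I would then optimize in $\lambda$ by choosing $\lambda = \Delta^{-1}\ln(1 + r\Delta/\sigma^2)$, so that $e^{\lambda \Delta} = 1 + r\Delta/\sigma^2$. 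A direct computation yields the sharp Bennett form $\exp\bigl(-(\sigma^2/\Delta^2)\, \psi(r\Delta/\sigma^2)\bigr)$, where $\psi(x) := (1+x)\ln(1+x) - x$. The stated bound follows from the elementary estimate $\psi(x) \ge (x/2)\ln(1+x)$ for $x \ge 0$, which one checks by noting that $(2+x)\ln(1+x) - 2x$ vanishes at $0$ and has nonnegative first derivative (the second derivative being strictly positive).

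I expect the main technical obstacle to be the supermartingale verification in the first step: for a general square-integrable c\`adl\`ag martingale, the It\^o argument produces both a continuous contribution and a jump-compensator contribution, and the latter must be controlled by $\phi(\lambda)\, d\langle N \rangle_t$ using predictability of the compensator of the jump measure. This is precisely why the hypothesis is phrased in terms of the predictable quadratic variation $\langle N \rangle$ rather than the optional one $[N]$. Once this step is in place, the remainder consists of routine optimization and the simple convex inequality for $\psi$.
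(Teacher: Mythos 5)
The paper does not actually prove this statement: it is imported by citation, as Theorem~2.4 of~\cite{RathValesin2017}, itself an adaptation of Theorem~26.17 of~\cite{Kallenberg2002}, so there is no in-paper argument to compare against. Your proposal is the standard exponential-supermartingale (Freedman/Bennett) proof, which is essentially how the cited sources obtain the bound, and its components are correct: with $\phi(\lambda)=(e^{\lambda\Delta}-1-\lambda\Delta)/\Delta^2$ one has $e^{\lambda x}\le 1+\lambda x+\phi(\lambda)x^2$ for $x\le\Delta$ and $\lambda^2/2\le\phi(\lambda)$; Doob's maximal inequality for the nonnegative supermartingale $Z^\lambda$ together with $\langle N\rangle_t\le\sigma^2$ for all $t\le S$ gives the Chernoff bound $\exp(-\lambda r+\phi(\lambda)\sigma^2)$; the choice $\lambda=\Delta^{-1}\ln(1+r\Delta/\sigma^2)$ yields $\exp(-(\sigma^2/\Delta^2)\psi(r\Delta/\sigma^2))$ with $\psi(x)=(1+x)\ln(1+x)-x$; and the reduction $\psi(x)\ge\tfrac{x}{2}\ln(1+x)$ (equivalently $(2+x)\ln(1+x)\ge 2x$, checked by differentiation) gives exactly the stated exponent. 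The one place where real work remains is the step you yourself flag: establishing that $Z^\lambda_t=\exp(\lambda(N_t-N_0)-\phi(\lambda)\langle N\rangle_t)$ is a supermartingale for a general square-integrable c\`adl\`ag martingale. Your It\^o sketch handles the continuous part and the absolutely continuous jump compensator, but you should also treat predictable jump times, where $\langle N\rangle$ itself may jump; there the supermartingale property follows from $\E[e^{\lambda\Delta N_\tau}\mid\mathcal F_{\tau-}]\le 1+\phi(\lambda)\,\E[(\Delta N_\tau)^2\mid\mathcal F_{\tau-}]=1+\phi(\lambda)\,\Delta\langle N\rangle_\tau\le e^{\phi(\lambda)\Delta\langle N\rangle_\tau}$, using $\E[\Delta N_\tau\mid\mathcal F_{\tau-}]=0$. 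Two minor remarks: only an upper bound on the jumps is needed for this one-sided inequality, and the hypothesis $\Delta\le\sigma$ is never used; neither affects correctness.
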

We use this theorem to prove the following lemma.
\begin{lemma}\label{lemma_prob_d_t_exp}
    There exist dimension-dependent constants~$a,b > 0$ such that for any~$u \ge 0$,~$j<i$ and any initial positions~$W^i_0,W^j_0$ we have that
    $$\sup_{t \ge 0} \; \widehat{\Prob}( |D_t(i,j)| > u ) \leq a \cdot \exp\{-b \cdot u\}.$$
\end{lemma}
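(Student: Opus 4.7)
The plan is to apply the Bennett-type inequality Theorem~\ref{thm_kallemberg} to each coordinate of $D_t(i,j)$, with the predictable quadratic variation controlled by the local time $I_{i,j}(\infty)$, which in turn has exponential tails by transience of the difference walk $W^i_t - W^j_t$ in $d \ge 3$.

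For each coordinate $a \in \{1,\dots,d\}$, set $M^{(a)}_t := D_t(i,j) \cdot e_a$. From the jump description in the text, $D$ is constant except at times $s$ with $|W^i_s - W^j_s|_1 = 1$, where it jumps by $+(W^j_s - W^i_s)$ and by $-(W^j_s - W^i_s)$, each at rate $1/(2d)$ — a symmetric mean-zero kick. Hence each $M^{(a)}$ is a c\`adl\`ag martingale with jumps bounded in absolute value by $1$ and predictable quadratic variation $\langle M^{(a)} \rangle_t \le I_{i,j}(t)/d$.

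Next I would establish $\widehat{\Prob}(I_{i,j}(\infty) > S) \le \kappa\, e^{-\gamma S}$ uniformly in $(W^i_0, W^j_0)$, for dimensional constants $\kappa,\gamma > 0$. Setting $Z_t := W^i_t - W^j_t$, the process $Z$ is a rate-$2$ simple random walk on $\Z^d$ while $|Z|_1 \ge 2$, and at $|Z|_1 = 1$ the would-be jump onto the origin is replaced by the flip $Z \to -Z$ (all other jumps still occur at rate $1/d$ each). Let $\tau_1 < \sigma_1 < \tau_2 < \cdots$ be the successive entry/exit times of $\{z : |z|_1 = 1\}$; each sojourn $\sigma_k - \tau_k$ is stochastically dominated by an exponential random variable of rate bounded below by a dimensional constant, since the rate of leaving $\{|z|_1 = 1\}$ is bounded below. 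Between sojourns $Z$ behaves as a rate-$2$ simple random walk on $\{|z|_1 \ge 2\}$, so by transience in $d \ge 3$ and a strong Markov argument at the times $\sigma_k$, there exists $p > 0$ such that $Z$ never returns to $\{|z|_1 = 1\}$ with probability at least $p$. Therefore the number of sojourns is dominated by a geometric random variable and $I_{i,j}(\infty)$, a geometric sum of exponential-tail sojourns, has exponential tails.

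To conclude, fix $u > 0$ and let $T_u := \inf\{t : I_{i,j}(t) \ge u\}$, which is a stopping time. The stopped process $M^{(a)}_{t \wedge T_u}$ is a martingale with jumps bounded by $1$ and predictable quadratic variation at most $u/d$, so Theorem~\ref{thm_kallemberg} applied with $r = u$, $\Delta = 1$, $\sigma^2 = u/d$ gives $\widehat{\Prob}(\sup_{t \ge 0} |M^{(a)}_{t \wedge T_u}| > u) \le \exp(-\tfrac{1}{2} u \ln(1+d))$. On $\{I_{i,j}(\infty) \le u\}$ one has $T_u = \infty$ and $M^{(a)}_{t \wedge T_u} = M^{(a)}_t$, so a union bound over the $d$ coordinates yields
\[
\widehat{\Prob}(|D_t(i,j)|_\infty > u) \;\le\; \kappa\, e^{-\gamma u} + d\,\exp\bigl(-\tfrac{1}{2} u \ln(1+d)\bigr) \;\le\; a\, e^{-b u}
\]
for suitable dimensional constants $a,b$. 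The main obstacle I expect is the exponential tail on $I_{i,j}(\infty)$: the flip rule at $|Z|_1 = 1$ means $Z$ is not a genuine random walk, so the geometric decomposition has to be justified via careful strong-Markov arguments at the exit times $\sigma_k$, and the non-return probability $p$ must be controlled uniformly in the post-sojourn state (e.g. using the Green function of a transient simple random walk on $\Z^d$).
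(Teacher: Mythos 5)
Your proposal is correct and follows essentially the same route as the paper: apply Theorem~\ref{thm_kallemberg} coordinate-wise with jumps bounded by $1$ and quadratic variation controlled by $I_{i,j}(t)/d$, then reduce to exponential tails for the adjacency local time via exponential sojourns and a geometric number of returns, using transience in $d\ge 3$. The only (harmless) difference is bookkeeping: you stop at $T_u=\inf\{t: I_{i,j}(t)\ge u\}$ and split on $\{I_{i,j}(\infty)>u\}$, whereas the paper conditions on $I_{i,j}(t)$, splits on $\{I_{i,j}(t)\ge du\}$, and carries out the Chernoff computation for the geometric sum of exponentials explicitly.
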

This lemma together with~\eqref{eq_i_X_T_W_T} proves Proposition~\ref{prop_couple_irw_ep}.
\begin{proof}[Proof of Lemma~\ref{lemma_prob_d_t_exp}]
We fix $i$ and $j<i$. Let~$D_t(i,j) = (D^1_t,\dots,D^d_t)$ and
\begin{equation}\label{I_k_t_rv}
        I^k_t := \mathrm{Leb} \big( \big\{ s \in [0,t]: W^i_s - W^j_s \in \{e_k,-e_k\} \big\} \big), \quad k =1,\dots,d.
\end{equation}
For each~$k$, the coordinate process~$(D^k_t)_{t \ge 0}$ evolves in~$\Z$ in the following way. Let us suppose~$W^i_{t_n^-} - W^j_{t_n^-} \in \{e_k,-e_k\}$, say~$W^j_{t_n^-} = W^i_{t_n^-} + e_k$. According to~\eqref{eq_D_t_D_t_n}, we have that
$$ D^k_{t_n} = \left\{ \begin{array}{ll}
    D^k_{t_n^-} - 1, & \text{if } t_n \in \mathcal T_i \text{ and } \mathsf m_i(t_n) = e_k, \\[2mm]
    D^k_{t_n^-} + 1, & \text{if } t_n \in \mathcal T_j \text{ and } \mathsf m_j(t_n) = -e_k, \\[2mm]
    D^k_{t_n^-}, & \text{otherwise.}
\end{array} \right. $$
Hence, given~$I^k_t$ this process has the same distribution as~$Y_{I^k_t}$, where~$(Y_t)_{t \ge 0}$ is a continuous-time simple random walk on~$\Z$ with jump rate~$1/d$ started from the origin. Recall that the predictable quadratic variation of a simple random walk is the deterministic process~$t$. By a union bound followed by an application of Theorem~\ref{thm_kallemberg} with~$S=I^k_t$,~$\sigma^2 = I^k_t/d$ and~$\Delta=1$ we obtain that 
\begin{equation}\label{D_T_ineq}
    \begin{aligned}
        \widehat{\Prob}(|D_t(i,j)| >  u) & \leq \sum^d_{k=1} 2 \cdot \widehat{\E} \big[ \widehat{\Prob} (D^k_t > u\mid I^k_t) \big] \\
        & \leq \sum^d_{k=1} 2 \cdot \widehat{\E} \Big[\exp \Big\{ -\frac{1}{2}u \ln \Big( 1+\frac{d\cdot u}{I^k_t } \Big) \Big\} \Big]\\
        & \leq 2d \cdot \exp \Big\{ -\frac{1}{2}u\ln( 2 ) \Big\} + 2d \cdot \widehat{\Prob} \big( I_t \ge d \cdot u \big), 
    \end{aligned}
\end{equation}
where
$$I_t := \sum^d_{k=1}I^k_t = \mathrm{Leb} \big( \big\{ s \in [0,t]: |W^i_s - W^j_s|_1 = 1 \big\} \big).$$ 
We now focus on~$I_t$. Note that the return time of $|W_s^i-W_s^j|_1$ to $1$ is the same as if the particles were independent random walks with rate $1$ and that the holding time when $|W_s^i-W_s^j|_1 = 1$ is exponentially distributed with parameter~$\tfrac{2d-1}{2d}$. Based on this observation, we now introduce the following random variables. Let $(\sigma_i)_{i \in \N}$ and $N$ be independent random variables such that 
$$ \sigma_i \stackrel{\mathrm{d}}{=}  \text{Exp}\big( \tfrac{2d-1}{2d} \big) \text{ for all } i\in\N \quad \text{ and } \quad N \stackrel{\mathrm{d}}{=}  \text{Geo}(p),  $$
where~$p$ denotes the probability that two independent random walks, having initially been~$\ell^1$-neighbors, become~$\ell^1$-neighbors again after moving apart, that is,
$$ p = \Prob( \exists t \ge 0: |X_t - Y_t|_1 = 1, |X_{t^-} - Y_{t^-}|_1 = 2 \;  \mid |X_0 - Y_0|_1 = 1).$$
By transience we have $p = p(d) \in (0,1)$. Moreover,~$I_s \leq \sum^N_{i = 1} \sigma_i$ for all $s \ge 0$ and for any initial locations~$W^i_0, W^j_0$.

Letting~$c = \tfrac{1}{2} \cdot (2d-1)\cdot (1-\sqrt{p})$, we have
\begin{equation}\label{I(t)_ineq_v}
    \begin{aligned}
         \widehat{\Prob} \big(I(t) \ge d\cdot u \big) & \leq \exp \big( - c\cdot u \big) \cdot \widehat{\E}\Big[\exp\Big( \frac{c}{d} \cdot \sum^N_{i = 1} \sigma_i \Big) \Big] \\[0.2cm]
         & = \exp \big(-c \cdot u \big) \cdot \sum_{n \ge 1} \widehat{\Prob}(N=n) \cdot \Big( \widehat{\E} \exp \Big( \frac{c}{d} \cdot \sigma_1 \Big) \Big)^n \\[0.2cm]
         & = \exp \big(-c \cdot u \big) \cdot \sum_{n \ge 1} (1-p) \cdot p^n \cdot \Big( \frac{1}{\sqrt{p}} \Big)^n \leq \exp \big(-c \cdot u \big) \cdot \sum_{n \ge 1} \big( \sqrt{p} \big)^n.
    \end{aligned}
\end{equation}
Therefore, plugging~\eqref{I(t)_ineq_v} back in~\eqref{D_T_ineq} we have the desired result with~$b = \min \big\{ \tfrac{\ln(2)}{2}, c \big\}$ and~$a=2d \big(1+ \sum_{n \ge 1} (\sqrt{p})^n \big)$.
\end{proof}

\subsection*{Acknowledgments}
J.A. was supported by FAPESP (grants 2023/13453-5 and 2025/02707-1) and by the CogniGron research center and the Ubbo Emmius Funds (Univ. of Groningen); the author is thankful for this support.

\bibliographystyle{alpha}
\bibliography{Ref.bib}

\end{document}